\documentclass{article}
\usepackage[utf8]{inputenc}
\usepackage{amsmath}
\usepackage{amsthm}
\usepackage{amssymb}
\usepackage{enumerate}
\usepackage{hyperref}
\usepackage{aliascnt}
\usepackage{tikz}
\usepackage[affil-it]{authblk}

\newtheorem{THM}{Theorem}[section]
\newtheorem{LEM}[THM]{Lemma}
\newtheorem{COR}[THM]{Corollary}

\makeatletter
\newtheorem*{rep@theorem}{\rep@title}
\newcommand{\newreptheorem}[2]{%
\newenvironment{rep#1}[1]{%
 \def\rep@title{#2 \ref{##1}}%
 \begin{rep@theorem}}%
 {\end{rep@theorem}}}
\makeatother

\newreptheorem{theorem}{Theorem}
\newreptheorem{lemma}{Lemma}

\newenvironment{subproof}[1][\proofname]{%
  \begin{proof}[#1]%
}{%
  \end{proof}%
}

\newtheorem{clm}{Claim}

\newcommand{\tuple}[1]{\ensuremath{\left \langle #1 \right \rangle }}

\sloppy

\begin{document}

\title{Tree-chromatic number is not equal to path-chromatic number\footnote{This research was supported by the European Research Council under the European Unions Seventh Framework
Programme (FP7/2007-2013)/ERC Grant Agreement no. 279558.}}

\author{Tony Huynh%
  \thanks{Email: \texttt{tony.bourbaki@gmail.com}}}
\affil{Department of Mathematics \\
  Universit\'e Libre de Bruxelles\\
  Boulevard du Triomphe, B-1050 \\
  Brussels, Belgium}

\author{Ringi Kim%
  \thanks{Email: \texttt{ringi.kim@uwaterloo.ca}}}
\affil{Department of Combinatorics and Optimization \\
University of Waterloo \\
Waterloo, Ontario N2L 3G1, Canada}

\date{Dated: \today}

\maketitle

\begin{abstract}
For a graph $G$ and a tree-decomposition $(T, \mathcal{B})$ of $G$, the {\em chromatic number} of $(T, \mathcal{B})$ is the maximum of $\chi(G[B])$, taken over all bags $B \in \mathcal{B}$.  The \emph{tree-chromatic number} of $G$ is the minimum chromatic number of all tree-decompositions $(T, \mathcal{B})$ of $G$. The \emph{path-chromatic number} of $G$ is defined analogously. In this paper, we introduce an operation that always increases the path-chromatic number of a graph.  As an easy corollary of our construction,  we obtain an infinite family of graphs whose path-chromatic number and tree-chromatic number are different.  This settles a question of Seymour \cite{Seymour16}.  Our results also imply that the path-chromatic numbers of the Mycielski graphs are unbounded. 
\end{abstract}

\section{Introduction}
A \emph{tree-decomposition} of a graph $G$ is a pair $(T, \mathcal{B})$
where $T$ is a tree and $\mathcal{B}:=\{B_t \mid t \in V(T)\}$ is a collection of subsets of vertices of $G$ satisfying:
\begin{itemize}
\item $V(G)= \bigcup_{t \in V(T)} B_t$, 
\item for each $uv \in E(G)$, there exists $t \in V(T)$ such that $u,v \in B_t$, and
\item for each $v \in V(G)$, the set of all $w \in V(T)$ such that $v \in B_w$ induces a connected subtree of $T$.  
\end{itemize}
We call each member of $\mathcal{B}$ a {\em bag}.
If $T$ is a path, then we say  a tree-decomposition $(T,\mathcal{B})$ of $G$ is  a {\em path-decomposition} of $G$. Since a path can be written as a sequence of vertices, we think of a path-decomposition of $G$ as a sequence of sets of vertices $B_1,B_2,\ldots,B_s$ such that 
\begin{itemize}
\item $V(G)= \bigcup_{1\le t \le s} B_t$, 
\item for each $uv \in E(G)$, there exists $1\le t \le s$ such that $u,v \in B_t$, and
\item for each $v \in V(G)$, the sets $B_i$ containing $v$ are consecutive in the sequence.
\end{itemize}

For a tree-decomposition $(T,\mathcal{B})$ of $G$, the \emph{chromatic number} of $(T, \mathcal{B})$ is $\max \{\chi(G[B_t]) \mid t \in V(T)\}$.  The \emph{tree-chromatic number} of $G$, denoted $\chi_T(G)$, is the minimum chromatic number taken over all tree-decompositions of $G$.  The \emph{path-chromatic number} of $G$, denoted $\chi_P(G)$, is defined analogously, where we insist that $T$ is a path instead of an arbitrary tree.  Both these parameters were introduced by Seymour \cite{Seymour16}.   Evidently, $\chi_T(G) \leq \chi_P(G) \leq \chi(G)$ for all graphs $G$.

The \emph{closed neighborhood} of a set of vertices $U$, denoted $N[U]$, is the set of vertices with a neighbor in $U$, together with $U$ itself. For every enumeration $\sigma=v_1,\ldots,v_n$ of the vertices of a  graph $G$, we denote by $P_{\sigma}^G$ the sequence $X_1, \ldots, X_n$ of sets of vertices of $G$ such that 
\[
X_{\ell}=N[ \{v_1, \dots, v_{\ell}\}] \setminus \{v_1,\ldots,v_{\ell-1}\}.
\] 

Observe that every vertex $v_i$ of $G$ belongs to $X_i$, and for $v_iv_j \in E(G)$ with $i<j$, both $v_i$ and $v_j$ belong to $X_i$. Furthermore, for $v_i \in V(G)$, if $m$ is the first index such that $v_i \in N[\{v_m\}]$, then $v_i \in X_{j}$ if and only if $ m\le j \le i$. So,  $P_{\sigma}^G$ is indeed a path decomposition of $G$. Let $\chi(P_{\sigma}^G)$ be the chromatic number of $P_{\sigma}^G$.

The following shows that for every graph $G$, there is an enumeration $\sigma$ of $V(G)$ such that $\chi(P_{\sigma}^G)=\chi_P(G)$.

\begin{LEM}\label{enumeration}
If $G$ has path-chromatic number $k$, then there is some enumeration $\sigma$ of $V(G)$ such that $P_{\sigma}^G$ has chromatic number $k$.
\end{LEM}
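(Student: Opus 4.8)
The plan is to take a path-decomposition of $G$ achieving the path-chromatic number $k$ and convert it into a path-decomposition of the special form $P_\sigma^G$ arising from an enumeration, without increasing the chromatic number. So let $(T, \mathcal{B})$ with $\mathcal{B} = B_1, B_2, \dots, B_s$ be a path-decomposition of $G$ with $\chi(G[B_t]) \le k$ for every $t$. The goal is to read off an enumeration $\sigma = v_1, \dots, v_n$ of $V(G)$ such that $P_\sigma^G$ refines this decomposition in the sense that every bag $X_\ell$ of $P_\sigma^G$ is contained in some bag $B_t$ of the original decomposition; since induced subgraphs of $B_t$ have chromatic number at most $\chi(G[B_t]) \le k$, this would give $\chi(P_\sigma^G) \le k$, and the reverse inequality $\chi(P_\sigma^G) \ge \chi_P(G) = k$ holds because $P_\sigma^G$ is itself a path-decomposition.

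First I would define the enumeration by ordering the vertices according to where they first appear in the sequence $B_1, \dots, B_s$. Concretely, for each vertex $v$ let $f(v)$ be the smallest index $t$ with $v \in B_t$, and enumerate the vertices so that $f$ is nondecreasing, breaking ties arbitrarily (but consistently, so that vertices with the same first-appearance index are listed in some fixed block). This is the natural choice because the third axiom of a path-decomposition guarantees that the bags containing any fixed vertex form a consecutive interval, so $v$ lives in bags $B_{f(v)}, B_{f(v)+1}, \dots, B_{g(v)}$ for some $g(v) \ge f(v)$.

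The key step is then to verify the containment $X_\ell \subseteq B_t$ for an appropriate $t$. Recall $X_\ell = N[\{v_1, \dots, v_\ell\}] \setminus \{v_1, \dots, v_{\ell-1}\}$, and that $v_\ell \in X_\ell$. Set $t = f(v_\ell)$, the first bag containing $v_\ell$. I would argue that every $w \in X_\ell$ lies in $B_t$. Such a $w$ is either $v_\ell$ itself (which lies in $B_t$ by definition of $f$), or a vertex with $w \notin \{v_1, \dots, v_{\ell-1}\}$ that is adjacent to some $v_j$ with $j \le \ell$. By the edge axiom, the edge $wv_j$ is covered by some common bag, and one shows using the interval property together with the monotonicity of the enumeration ($f(v_j) \le f(v_\ell) = t$ and $w$ appearing no earlier than $v_\ell$, so $f(w) \ge t$) that $w$ and $v_j$ must coexist in $B_t$. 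The main obstacle is exactly this coexistence argument: I need to pin down that the intervals of bags containing $w$ and containing $v_j$ overlap precisely at index $t$, which requires carefully combining that $v_j$ appears by time $t$, that $w$ has not yet disappeared (since it appears no earlier than $v_\ell$ and their shared edge forces their intervals to meet), and that $v_j, w$ share some bag. Handling the case $w = v_\ell$ versus $w$ a later-or-equal vertex, and ruling out that their bag-intervals fail to include $t$, is the delicate bookkeeping; once that is established, the chromatic bound follows immediately since $G[X_\ell]$ is an induced subgraph of $G[B_t]$.
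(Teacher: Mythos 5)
Your overall strategy --- extract an enumeration $\sigma$ from an optimal path-decomposition $B_1,\dots,B_s$ so that every bag of $P_\sigma^G$ lands inside some $B_t$ --- is exactly the paper's, but your choice of ordering is wrong, and this is not a bookkeeping issue that more care can repair. Ordering by \emph{first} appearance and aiming for $X_\ell\subseteq B_{f(v_\ell)}$ already fails on the path $a\,b\,c$ with bags $B_1=\{a,b\}$, $B_2=\{b,c\}$: the enumeration is $a,b,c$ (up to the tie between $a$ and $b$), and $X_2=\{b,c\}\not\subseteq B_{f(b)}=B_1$. Worse, the first-appearance ordering can genuinely increase the chromatic number, so no choice of target bag saves it. Let $G$ be the $5$-cycle $c_1c_2c_3c_4c_5$ with a pendant vertex $z_i$ attached to each $c_i$, and take the path-decomposition $B_1=\{z_1,\dots,z_5\}$, $B_2=\{z_1,\dots,z_5,c_1,c_2,c_5\}$, $B_3=\{z_3,z_4,c_2,c_3,c_4,c_5\}$; each bag induces a forest, so this decomposition witnesses $\chi_P(G)=2$. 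Every $z_i$ first appears in $B_1$, so your enumeration begins $z_1,\dots,z_5$, and then
\[
X_5=N[\{z_1,\dots,z_5\}]\setminus\{z_1,\dots,z_4\}\supseteq\{c_1,\dots,c_5\},
\]
which induces $C_5$ and has chromatic number $3>2$.

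The fix is to order by \emph{last} appearance instead: let $\ell(v)$ index the last bag containing $v$ and enumerate so that $\ell$ is nondecreasing; this is what the paper does. Then $X_i\subseteq B_{\ell(v_i)}$ does hold: if some $v_j\in X_i$ were outside $B_{\ell(v_i)}$, then $j>i$, so $\ell(v_i)\le\ell(v_j)$, and by the interval property the first bag containing $v_j$ comes strictly after $B_{\ell(v_i)}$; but $v_j\in X_i$ forces a neighbour $v_{j'}$ with $j'\le i$, whose last bag is at or before $B_{\ell(v_i)}$, so $v_j$ and $v_{j'}$ share no bag, contradicting the edge axiom. The reason last-appearance works where first-appearance fails is that $X_i$ looks forward, collecting all not-yet-listed neighbours of $\{v_1,\dots,v_i\}$; you need the already-listed vertices to have bag-intervals that have essentially terminated by the reference bag, which is exactly what ordering by last bag provides, whereas an early-first-appearing vertex (like the $z_i$ above) can have a long interval that drags far-future neighbours into $X_i$.
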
 
We prove this later in this section. Furthermore, the obvious modification of a standard dynamic programming algorithm (see Section 3 of \cite{SV2009}) yields a $O(n4^n)$-time algorithm to test if $G$ has path-chromatic number at most $k$.


We write  $[n]$ for $\{1,2,\ldots,n\}$. 
For a graph $G$ with vertex set $V(G)$, 
let $R_m(G)$ be the graph with vertex set $\{(i,v) \mid i \in [m], v \in V(G) \cup \{v_0\}\}$ where $v_0 \not \in V(G)$,  such that two distinct $(i,v)$ and $(i',v')$ are adjacent  if and only if one of the following holds:
\begin{itemize}
\item $i=i'$ and exactly one of $v$ or $v'$ is $v_0$, or
\item $i\neq i'$, $v, v' \in V(G)$ and $v v' \in E(G)$.
\end{itemize}

For a subset of vertices $S$, we let $\tuple{S}$ denote the subgraph induced by $S$ (the underlying graph will always be clear).  We also abbreviate $\chi(\tuple{S})$ by $\chi(S)$. The main theorems of this paper are the following.
For an enumeration $\sigma=v_1,\ldots,v_n$ of $V(G)$ with $P_{\sigma}^G=X_1,X_2,\ldots,X_n$, we say $\sigma$ is {\em special} if 
\begin{itemize}
\item $\chi(P_{\sigma}^G)=\chi_P(G)$ and
\item for every $1\le i \le n$ with $\chi(X_i)=\chi_P(G)$, $v_i$ has no neighbor in $\{v_1,\ldots,v_{i-1}\}$.
\end{itemize}

\begin{THM}\label{mainthm}
Let $n$ and $k$ be positive integers, with $k \geq 2$. For every integer $m \geq n+k+2$ and every graph $G$ with $\chi_p(G)=k$ and $|V(G)|=n$, the path-chromatic number of $R_m(G)$ is $k$ if there is a special enumeration of $V(G)$. Otherwise, the path-chromatic number of $R_m(G)$ is $k+1$.

\end{THM}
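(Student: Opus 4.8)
The plan is to peel off the two easy inequalities first and then concentrate on the delicate equivalence that the statement really encodes. For the lower bound $\chi_P(R_m(G))\ge k$, note that $R_m(G)$ contains an \emph{induced} copy of $G$: since $m\ge n+k+2>n$, pick distinct layers $i_1,\dots,i_n$ and map $v_a\mapsto(i_a,v_a)$; two such vertices are adjacent exactly when $v_av_b\in E(G)$ (the $v_0$-edges never apply, as the layers are distinct and no coordinate is $v_0$), so this is an induced $G$. As path-chromatic number is monotone under subgraphs (restrict every bag to the subgraph), this yields $\chi_P(R_m(G))\ge\chi_P(G)=k$ unconditionally. For the general upper bound $\chi_P(R_m(G))\le k+1$, I would fix an enumeration $\sigma=v_1,\dots,v_n$ with $\chi(P_\sigma^G)=k$ (Lemma \ref{enumeration}) and take the path-decomposition $Y_1,\dots,Y_n$ with $Y_\ell=\{(i,v):i\in[m],\ v\in X_\ell\}\cup\{(i,v_0):i\in[m]\}$. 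This is legal (each $(i,v_0)$ lies in every bag, and each $(i,v)$ occupies the consecutive bags $Y_\ell$ with $v\in X_\ell$), and every $Y_\ell$ is $(k+1)$-colourable: the graph on its $G$-vertices is a subgraph of the tensor product $K_m\times G[X_\ell]$, properly $k$-coloured by $(i,v)\mapsto c(v)$ for a proper $k$-colouring $c$ of $G[X_\ell]$, after which the $v_0$-vertices (a stable set) all take the new colour $k+1$.

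By Lemma \ref{enumeration} applied to $R_m(G)$, the remaining content reduces to the equivalence: some enumeration $\tau$ of $V(R_m(G))$ has $\chi(P_\tau^{R_m(G)})\le k$ if and only if $G$ has a special enumeration. For the direction special $\Rightarrow\ \le k$, the only dangerous bags are the \emph{critical} ones, where $\chi(G[X_\ell])=k$; a non-critical bag already has $\chi(G[X_\ell])\le k-1$, so $k-1$ colours on its $G$-vertices leave colour $k$ free for all of its $v_0$-vertices. The crux is to design a path-decomposition in which, at every critical bag, each layer that must carry its $v_0$-vertex \emph{omits} a colour, so that vertex stays inside $[k]$. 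This is exactly where freshness is used: because $v_\ell$ has no earlier neighbour, I can \emph{stagger} the copies of $X_\ell$ across layers so that no single bag presents the full set $X_\ell$ in a layer that simultaneously hosts its $v_0$-vertex, decoupling the new part $\{v_\ell\}\cup B$, with $B=\{v_{\ell'}:\ell'>\ell,\ v_{\ell'}\sim v_\ell\}\setminus N(\{v_1,\dots,v_{\ell-1}\})$, from the carry-over $A=X_{\ell-1}\cap X_\ell$. Checking that this staggering glues to the full non-critical bags into a genuine path-decomposition with every bag $k$-colourable is the first substantial task.

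The reverse direction—equivalently, \emph{no special enumeration} $\Rightarrow\ \chi_P(R_m(G))\ge k+1$—is the main obstacle, and it is where $m\ge n+k+2$ is spent. The engine is a colouring lemma for the tensor product: if $\chi(H)=k$ then, for $m$ large, there is \emph{no} proper $k$-colouring of $K_m\times H$ in which every layer omits a colour. (For $H=K_k$ this is transparent: a layer omitting a colour must give some colour to two vertices, which are adjacent in $K_k$, and the cross-layer constraints then force that colour to appear in no other layer, so more than $k$ such layers cannot coexist.) The plan is to start from $\tau$ with $\chi(P_\tau^{R_m(G)})\le k$, read off from the order of appearance of the copies $(j,v)$ a candidate enumeration of $G$ for each layer $j$, and use $m\ge n+k+2$ to discard the boundedly many ``spoiled'' layers so that a surviving one gives an enumeration $\sigma$ with $\chi(P_\sigma^G)=k$. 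If no such $\sigma$ could be made special, the failure of freshness at some critical bag would force a full critical set to appear across more than $k$ layers together with their $v_0$-vertices, whereupon the colouring lemma forces that bag to need $k+1$ colours, contradicting $\chi(P_\tau^{R_m(G)})\le k$. The points I expect to fight with are making the layer-to-enumeration extraction robust enough that only few layers are spoiled (hence the exact threshold $n+k+2$), and pinning down the precise form of the colouring lemma for a general $H=G[X_\ell]$ rather than only for cliques.
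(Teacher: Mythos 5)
Your overall architecture---the lower bound via an induced copy of $G$, the unconditional upper bound $k+1$ via bags of the form $[[m],X_\ell]\cup[[m],\{v_0\}]$, and the reduction of the theorem to the equivalence ``$\chi_P(R_m(G))=k$ iff $G$ has a special enumeration''---matches the paper's, and the two bounds are correct as you state them. But both directions of the equivalence have genuine gaps. In the direction ``special $\Rightarrow\ \chi_P(R_m(G))\le k$'', your staggering plan is never instantiated, and, more importantly, it cannot be verified without a fact you never isolate: for a special enumeration, $\chi(X_j)=k$ forces $\chi(X_j\setminus v_j)=k-1$ (Lemma~\ref{special}; the paper proves it by looking at the first vertex of $X_j\setminus v_j$, whose bag has chromatic number at most $k-1$ by specialness and contains $X_j\setminus v_j$). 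This is the only reason a critical bag of $R_m(G)$ is $k$-colourable at all: whatever the ordering, the bag where the copies of $v_j$ enter still contains $[I,X_j\setminus v_j]$ for a large set $I$ of layers together with a $v_0$-vertex complete to one of those layers, and if $\chi(X_j\setminus v_j)$ were $k$ then every layer of $[I,X_j\setminus v_j]$ would see all $k$ colours and that $v_0$-vertex could not be coloured. Your diagnosis that the danger is a bag ``presenting the full set $X_\ell$ in a layer that hosts its $v_0$-vertex'' is off: the colouring lemma forces every layer of $[I,X_\ell]$ to see all colours even when no single layer contains all of $X_\ell$, so staggering alone buys nothing.

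In the reverse direction you rest everything on a colouring lemma that you only prove for $H=K_k$ and explicitly leave open in general; the needed statement (if $|I|>\chi(U)$ then in any $\chi(U)$-colouring of $\tuple{[I,U]}$ \emph{every} layer sees every colour) is Lemma~\ref{coloring}, proved by embedding a copy of $\tuple{U}$ into the remaining layers via Lemma~\ref{isomorphic} and using colour-criticality; this is not a routine extension of the clique case. Your ``discard the spoiled layers'' step also hides a second unproved ingredient: one must bound by $k$ the number of layers whose $v_0$-vertex appears before the critical moment (part (2) of Lemma~\ref{bound}), and that bound is itself an application of the colouring lemma, not a free count; without it you cannot guarantee that more than $\chi(X_\ell)$ clean layers survive, which is precisely what $m\ge n+k+2$ is calibrated for. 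Finally, the candidate enumeration of $V(G)$ should be read off once, from the order of first appearance of the fibres $[[m],\{v\}]$, not per layer; showing that this single $\sigma$ satisfies $\chi(P_\sigma^G)\le k$ requires exhibiting $\tuple{X_\ell}$ inside the bag at that first appearance (part (1) of Lemma~\ref{bound}), which your sketch does not do. So the skeleton is right, but the three lemmas that actually carry the proof are absent or only conjectured.
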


Theorem~\ref{mainthm} does not guarantee that applying $R_m$ always increases path-chromatic number.  On the other hand, our second theorem shows that applying $R_m$ \emph{twice} always increases path-chromatic number. 

\begin{THM}\label{mainsec}
Let $G$ be a graph with $\chi_P(G)=k$ and $|V(G)|=n$. For all integers $\ell$ and $m$ such that $m \ge n+k+2$ and $\ell \ge m (n+1)+k+3$, the path-chromatic number of $R_{\ell}(R_{m}(G))$ is strictly larger than $k$.
\end{THM}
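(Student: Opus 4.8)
The plan is to deduce Theorem~\ref{mainsec} from Theorem~\ref{mainthm} by applying the latter twice and exploiting a structural dichotomy about special enumerations. The key observation is that Theorem~\ref{mainthm} tells us that $R_m$ increases the path-chromatic number \emph{unless} there is a special enumeration; so to force a strict increase after two applications, I would show that once we pass to $R_m(G)$, no special enumeration can survive a second application of the operation. Concretely, set $H = R_m(G)$. By Theorem~\ref{mainthm} applied to $G$, either $\chi_P(H) = k+1$ (and then since $\chi_P(R_\ell(H)) \ge \chi_P(H) \ge k+1 > k$ we are immediately done, as the operation $R_\ell$ cannot decrease path-chromatic number below that of $H$—indeed one checks $G$ embeds in $R_\ell(G)$ as the copies indexed by a single $i$ together with their $v_0$ vertices), or $\chi_P(H)=k$ and $G$ admits a special enumeration.

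\medskip

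The substantive case is therefore $\chi_P(H)=k$. Here I would apply Theorem~\ref{mainthm} a second time, now to the graph $H$ with $|V(H)| = m(n+1)$ and $\chi_P(H)=k$, using the hypothesis $\ell \ge m(n+1)+k+3 = |V(H)| + k + 3 > |V(H)|+k+2$, which is exactly the size threshold Theorem~\ref{mainthm} demands of the multiplicity parameter. The conclusion of that theorem is that $\chi_P(R_\ell(H)) = k$ if $H$ has a special enumeration, and $\chi_P(R_\ell(H)) = k+1$ otherwise. Thus the entire argument reduces to the following claim: \emph{the graph $H = R_m(G)$ has no special enumeration.} Granting this claim, we get $\chi_P(R_\ell(R_m(G))) = k+1 > k$, which is precisely Theorem~\ref{mainsec}.

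\medskip

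The heart of the proof—and the step I expect to be the main obstacle—is establishing that $H=R_m(G)$ admits no special enumeration. The difficulty is that being special is a delicate joint condition: the enumeration must simultaneously be optimal ($\chi(P_\sigma^H)=\chi_P(H)=k$) and have the property that at every index $i$ where the bag attains the maximum chromatic number $k$, the vertex $v_i$ has no earlier neighbor. To rule this out I would argue by contradiction: suppose $\sigma$ is a special enumeration of $H$, and examine a bag $X_i$ with $\chi(X_i)=k$. The structure of $R_m$ is highly symmetric—there are $m$ copies of $V(G)\cup\{v_0\}$, with the special apex vertices $(i,v_0)$ joined to an entire copy and cross-copy edges mirroring $E(G)$. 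Because $m$ is large (at least $n+k+2$), any $k$-chromatic bag must draw vertices from several copies, and I would trace how the no-earlier-neighbor condition at the critical vertex $v_i$ forces a subset of a single copy to behave like an optimal special enumeration of $G$ itself, while simultaneously the apex/cross-copy edges force an additional color, producing the contradiction $\chi(X_i)\ge k+1$ or violating optimality. Making this rigorous will require a careful case analysis of where the critical vertices sit among the copies and of the interaction between the intra-copy edges (through $v_0$) and the inter-copy edges (copying $E(G)$); this counting and case-splitting, leveraging the size bound $m \ge n+k+2$ to guarantee enough copies are available, is the technical core of the proof.
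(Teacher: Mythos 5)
Your reduction is the same as the paper's: split on whether $\chi_P(R_m(G))$ is $k$ or $k+1$, dispose of the $k+1$ case immediately, and in the $k$ case apply Theorem~\ref{mainthm} to $H=R_m(G)$ (checking $\ell \ge |V(H)|+k+2$) so that everything hinges on showing $H$ admits no special enumeration. That framing is correct. But the proof of that last claim is the entire substance of the theorem, and you have not given it --- you explicitly defer it as ``the technical core'' requiring ``a careful case analysis'' that you do not carry out. As written, this is a proof of the easy reduction plus a statement of the hard lemma, not a proof of the theorem.

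Moreover, the contradiction you gesture at is not the one that actually works. You predict that a $k$-chromatic bag of a special enumeration of $H$ would be forced to have chromatic number $k+1$ ``or violate optimality.'' The paper's argument is different and more delicate: from a hypothetical special enumeration $\mu$ of $H$ one extracts (via the first-appearance order of the copies of each $v\in V(G)$) an enumeration $\sigma$ of $G$ with $\chi(P_\sigma^G)=k$ (Lemma~\ref{bound}), picks a bag $X_j$ with $\chi(X_j)=k$, and proves $\chi(X_j\setminus v_j)=k-1$ using Lemmas~\ref{isomorphic} and~\ref{special} together with the count $|I|\ge m-k-j > |X_j\setminus v_j|$ (this is where both the bound $m\ge n+k+2$ and part~(2) of Lemma~\ref{bound}, controlling how many apex vertices $(i,v_0)$ can appear early, are consumed). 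Then Lemma~\ref{coloring} forces every row $[\{i\},X_j\setminus v_j]$ to see all $k-1$ colors, so adjoining the apex $(i,v_0)$ pushes the chromatic number of a sub-bag of $Y_{(i,v)}$ up to $k$ \emph{even after deleting the vertex $(i,v)$ itself} --- and that contradicts Lemma~\ref{special} applied to $\mu$, which says deleting the defining vertex of a $k$-chromatic bag of a special enumeration must drop the chromatic number to $k-1$. None of the quantitative machinery (the interplay of $t(v)$, the set $I$, and the two parts of Lemma~\ref{bound}) appears in your sketch, so the gap is not merely one of polish.
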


Theorem \ref{mainthm} easily implies the following corollary. 

\begin{COR} \label{notequal}
For every positive integer $k$, there is an infinite family of $k$-connected graphs $G$ for which $\chi_T(G) \neq \chi_P(G)$.
\end{COR}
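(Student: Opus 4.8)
The plan is to produce, for each $k$, an infinite family of graphs of the form $H=R_m(G)$ built from a single carefully chosen base graph $G$ with $n=|V(G)|$, and to separate $\chi_T(H)$ from $\chi_P(H)$ by sandwiching: I will establish $\chi_T(H)\le k$ and $\chi_P(H)=k+1$. The path-chromatic side is handed to us by Theorem~\ref{mainthm}: if $G$ satisfies $\chi_P(G)=k$ and admits \emph{no} special enumeration, then for every $m\ge n+k+2$ we get $\chi_P(R_m(G))=k+1$. So the two things I must supply are (i) a tree-decomposition of $R_m(G)$ witnessing $\chi_T(R_m(G))\le k$, and (ii) a base graph $G$ with the right properties; varying $m$ then yields infinitely many graphs, and I will arrange that each is $k$-connected.

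For (i) I would use a \emph{star} tree-decomposition. Let the central bag be $C=\{(i,v): i\in[m],\,v\in V(G)\}$, that is, all vertices except the apices $(i,v_0)$. By the definition of $R_m$, within a layer these vertices are pairwise non-adjacent while two vertices in different layers are adjacent exactly when the corresponding vertices are adjacent in $G$; hence $\langle C\rangle$ is the categorical product $G\times K_m$, and sending a proper colouring $c$ of $G$ to $(i,v)\mapsto c(v)$ shows $\chi(C)=\chi(G\times K_m)\le\chi(G)$. For each $i\in[m]$ attach a pendant bag $D_i=\{(i,v_0)\}\cup\{(i,v):v\in V(G)\}$, which induces a star and so has $\chi(D_i)=2$. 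This is a valid tree-decomposition: the cross-layer edges live in $C$ and the within-layer star edges live in the $D_i$; every leaf $(i,v)$ lies in exactly the two adjacent nodes $C$ and $D_i$, and every apex $(i,v_0)$ lies only in $D_i$, so all vertex-subtrees are connected. Consequently $\chi_T(R_m(G))\le\max(\chi(G),2)$, and if the base graph has $\chi(G)=k$ with $k\ge2$ this gives $\chi_T(R_m(G))\le k$, strictly below the value we will force on $\chi_P$.

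The delicate point, and the main obstacle, is (ii): I need a base graph $G$ with $\chi(G)=\chi_P(G)=k$ \emph{and} no special enumeration. The equality $\chi(G)=\chi_P(G)$ is precisely what keeps the single central bag $C$ colourable with $k$ colours, while the absence of a special enumeration is what Theorem~\ref{mainthm} converts into the jump $\chi_P(R_m(G))=k+1$. These two demands pull against each other: $\chi_P(G)=\chi(G)$ means the $k$-chromatic obstruction survives every path-decomposition, yet to \emph{kill} every special enumeration I must guarantee that in no optimal enumeration is a maximum-chromatic bag created by a fresh vertex (one with no earlier neighbour). The natural highly symmetric candidates such as $K_k$, complete multipartite graphs, or books all fail, because one can always introduce an apex of a densest substructure first and thereby realise a fresh critical bag; the base graph must therefore have its $k$-chromatic obstruction \emph{spread out}. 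For $k=3$ this pushes one toward triangle-free graphs (so that no closed neighbourhood is itself $3$-chromatic) with $\chi=\chi_P=3$ in which every non-bipartite bag arising in an optimal enumeration can only be completed by a vertex possessing an earlier neighbour. Verifying that such graphs exist, and exhibiting them for all $k$, is where the real work lies.

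Finally, for the infinite family and connectivity: distinct values of $m\ge n+k+2$ give graphs $R_m(G)$ of distinct orders, hence infinitely many pairwise non-isomorphic examples, each satisfying $\chi_T\le k<k+1=\chi_P$. To make them $k$-connected I would take the base graph with minimum degree at least $1$ and on at least $k$ vertices; then for large $m$ any two vertices of $R_m(G)$ are joined through the densely interconnected layers by many internally disjoint paths, and a routine cut argument yields $k$-connectivity, completing the proof.
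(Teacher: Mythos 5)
Your overall architecture matches the paper's: a star tree-decomposition to bound $\chi_T(R_m(G))$ from above, Theorem~\ref{mainthm} to force $\chi_P(R_m(G))=\chi_P(G)+1$ via the absence of a special enumeration, and varying $m$ to get an infinite family. The star decomposition you describe (central bag $[[m],V(G)]$, pendant bags $\{(i,v_0)\}\cup[\{i\},V(G)]$) is valid and gives $\chi_T(R_m(G))\le\max(\chi(G),2)$, which is a slightly cleaner variant of the decomposition in Lemma~\ref{c5treechi}. But there is a genuine gap: you never produce the base graph. Your entire argument hinges on exhibiting a graph $G$ with $\chi(G)=\chi_P(G)=k$ that admits no special enumeration, and you explicitly defer this (``where the real work lies''). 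Without it, Theorem~\ref{mainthm} gives you nothing, and the corollary is not proved.

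The paper avoids this difficulty entirely by decoupling the connectivity parameter $k$ from the chromatic gap. It takes $G=C_n$ for odd $n\ge\max(5,k)$, so $\chi_P(C_n)=2$, and Lemma~\ref{cycle} gives a short, concrete proof that \emph{no} enumeration of an odd cycle is special: partition $V(C_n)$ into the vertices whose neighbors both precede them ($L$), both follow them ($R$), or neither ($M$); if $M\neq\emptyset$ some bag of chromatic number $2$ is created by a vertex with an earlier neighbor, and if $M=\emptyset$ then $L$ and $R$ are stable sets $2$-coloring $C_n$, a contradiction. Hence $\chi_P(R_m(C_n))=3$ while $\chi_T(R_m(C_n))=2$, and $k$-connectivity comes from taking the cycle long enough ($n\ge k$) together with a short explicit path argument, not from raising the chromatic number of the base graph. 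Note also that your requirement $\chi(G)=\chi_P(G)$ is in tension with your decomposition when instantiated at $C_n$: the full central bag $[[m],V(C_n)]$ induces $C_n\times K_m$, which is $3$-chromatic for odd $n$, which is why the paper's version of the star decomposition deletes one column of the cycle from the central bag to make it bipartite. So the fix is not to search for ever-more-chromatic base graphs but to settle for the gap $2$ versus $3$ and get $k$-connectivity from the size of $C_n$.
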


These are the first known examples of graphs with differing tree-chromatic and path-chromatic numbers, which settles a question of Seymour \cite{Seymour16}.  Seymour also suspects that there is no function $f: \mathbb{N} \to \mathbb{N}$ for which $\chi_P(G) \leq f(\chi_T(G))$ for all graphs $G$, but
unfortunately our results are not strong enough to derive this stronger conclusion.  

Our results also imply that the family of Mycielski graphs have unbounded path-chromatic numbers.
For $k\geq 2$, the \emph{$k$-Mycielski graph $M_k$}, is the graph with $3 \cdot 2^{k-2}-1$ vertices constructed recursively in the following way. $M_2$ is a single edge and $M_k$ is obtained from $M_{k-1}$ by adding $3\cdot 2^{k-3}$ vertices $w,u_1,u_2,\ldots,u_{3\cdot 2^{k-3}-1}$ and adding edges $wu_i$ for all $i$ and $u_iv_j$ for all $i\neq j$ such that $v_iv_j \in E(M_{k-1})$ where $v_1,v_2,\ldots,v_{3\cdot 2^{k-3}-1}$ are the vertices of $M_{k-1}$. Here we say $u_i$ \emph{corresponds} to $v_i$. It is easy to show (see \cite{Mycielski55}) that for all $k \geq 2$,  $M_k$ is triangle-free and $\chi(M_k)=k$.

\begin{COR} \label{unbounded}
For every positive integer $c$, there exists a positive integer $n(c)$ such that the $n(c)$-Mycielski graph has path-chromatic number larger than $c$.
\end{COR}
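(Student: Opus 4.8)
The plan is to show that the Mycielski graphs have unbounded path-chromatic number by relating them to the $R_m$ operation. The key observation is that the Mycielski construction $M_{k}$ from $M_{k-1}$ is essentially a ``one-row'' version of the $R_m$ operation: both introduce twin-like copies of existing vertices together with an apex-style vertex. So my first step would be to make this correspondence precise. Specifically, I would try to show that for a suitable choice of $m$, the graph $R_m(G)$ is a subgraph (or an induced subgraph, or a minor-like substructure) of some iterated Mycielski graph, or conversely that iterated Mycielskian contains many copies of $R_m$ of the relevant base graph. The cleanest route is to argue that $R_m(G)$ embeds as an induced subgraph into $M_k$ for $k$ large enough relative to $m$ and $|V(G)|$, since path-chromatic number is monotone under taking induced subgraphs (any path-decomposition of $M_k$ restricts to one of the induced subgraph, and restricting bags cannot increase $\chi$).

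Granting that monotonicity, the strategy is an iterative bootstrapping argument driven by Theorem~\ref{mainsec}. Start with a base graph $G_0$ with $\chi_P(G_0) = 2$ (for instance a single edge, which is $M_2$). Then define $G_{j+1} = R_{\ell_j}(R_{m_j}(G_j))$ with the parameters $\ell_j, m_j$ chosen to satisfy the hypotheses of Theorem~\ref{mainsec} for $G_j$. By that theorem, $\chi_P(G_{j+1}) > \chi_P(G_j)$, so $\chi_P(G_j) \ge j+2$; hence these doubly-$R$-iterated graphs have path-chromatic number tending to infinity. It then suffices to exhibit, for each $j$, an induced copy of $G_j$ inside some Mycielski graph $M_{k(j)}$, which would force $\chi_P(M_{k(j)}) \ge \chi_P(G_j) \ge j+2$, giving the desired unboundedness with $n(c) = k(c)$.

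The main obstacle, and the real content of the proof, is establishing the embedding $G_j \hookrightarrow M_{k(j)}$ as an induced subgraph with the correct edge relations. I would attack this by induction on $j$, unwinding both recursions in parallel: the Mycielski step adds, for each old vertex $v_i$, a new vertex $u_i$ adjacent to the old neighbors of $v_i$ (but not to $v_i$ itself) plus a common apex $w$ adjacent to all the $u_i$. Comparing this with the adjacency rules defining $R_m(G)$ — where copies in different ``rows'' $i \ne i'$ inherit the edges of $G$, while within a row the special vertex $v_0$ plays an apex role — I would set up an explicit map identifying the $m$ rows of $R_m(G)$ with $m$ successive Mycielski layers and identifying the per-row apex vertices $(i,v_0)$ with the apex vertices $w$ introduced at each Mycielski step. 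The delicate points are (i) checking that non-edges are preserved (triangle-freeness of $M_k$ is a helpful sanity constraint here, and consistent with the bipartite-within-row structure of $R_m$), and (ii) bookkeeping the vertex counts so that $M_k$ has enough layers to host all rows, which is where the doubling $3 \cdot 2^{k-2}-1$ in the size of $M_k$ enters and determines the growth rate of $k(c)$. Once the induced embedding is verified, the corollary follows immediately from induced-subgraph monotonicity and Theorem~\ref{mainsec}.
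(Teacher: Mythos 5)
Your proposal is correct and follows essentially the same route as the paper: the embedding you describe (rows of $R_m$ mapped to successive Mycielski layers, the per-row apexes $(i,v_0)$ mapped to the vertices $w$ added at each step) is precisely the paper's Lemma~\ref{Mycielskian}, which combined with Theorem~\ref{mainsec} and induced-subgraph monotonicity of $\chi_P$ gives the corollary. The only caution is that such an embedding cannot hold for an arbitrary base graph $G$ (if $G$ has a triangle then so does $R_m(G)$, while $M_k$ is triangle-free), but your bootstrapping from $M_2$ ensures the base graph always sits inside a Mycielski graph, which is exactly what is needed.
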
  

We prove Corollary~\ref{notequal} and Corollary~\ref{unbounded} in Section~\ref{sec:corollaries}. 
We finish this section by proving Lemma~\ref{enumeration}.

\begin{proof}[Proof of Lemma~\ref{enumeration}.]
For every path-decomposition $(P,\mathcal{B})=B_1,B_2,\ldots,B_s$ of $G$, we prove that there exists an enumeration $\sigma$ of $V(G)$ such that the chromatic number of $P_{\sigma}^G$ is at most that of $(P,\mathcal{B})$.  Let $\sigma=v_1,v_2,\ldots,v_n$ be an enumeration of $V(G)$ such that for all $u,v \in V(G)$, if the last bag of $(P,\mathcal{B})$ containing $u$ comes before the last bag of $(P,\mathcal{B})$ containing $v$ then $u$ comes before $v$ in $\sigma$.  It is easy to show that such an enumeration always exists. 
Let $P^G_{\sigma}=X_1,X_2,\ldots,X_n$ and for $1\le i \le n$, let $B_{\ell(i)}$ be the last bag of $(P,\mathcal{B})$ containing $v_i$. 
It is enough to prove that for $1\le i \le n$, $B_{\ell(i)}$ contains $X_i$.

Suppose $v_j \in X_i \setminus B_{\ell(i)}$. 
Obviously $i\neq j$, and since $v_j \in X_i$, we obtain $i<j$. 
Let $B_{f(j)}$ be the first bag of $(P,\mathcal{B})$ containing $v_j$.
Since the bags containing $v_j$ are consecutive in $(P,\mathcal{B})$, $v_j \not \in B_{\ell(i)}$ and $\ell(i) \le \ell(j)$, we obtain that $\ell(i)<f(j)$.
Let $v_k$ be a neighbour of $v_j$ with $k \le i$. Such a $v_k$ exists since $v_j \in X_i$. 
Then, $\ell(k) \le \ell(i) $ since $k\le i$, so $\ell(k) <f(j)$. 
Therefore, there is no bag of $(P,\mathcal{B})$ containing both $v_k$ and $v_j$ because the last bag containing $v_k$ comes before the first bag containing $v_j$. But this is a contradiction since $v_kv_j \in E(G)$.
Thus, $X_i \subseteq B_{\ell(i)}$ as claimed, and we deduce that the chromatic number of $P_{\sigma}^G$ is at most that of $(P,\mathcal{B})$.
\end{proof}

\section{Deriving the Corollaries} \label{sec:corollaries}

Assuming Theorems \ref{mainthm} and \ref{mainsec}, it is straightforward to derive Corollaries \ref{notequal} and \ref{unbounded}, which we do in this section. 
Let $C_n$ denote the $n$-cycle.

\begin{LEM}\label{cycle}
For all odd integers $n \ge 5$ and all integers $m \geq n+4$, the path-chromatic number of $R_m(C_n)$ is 3.  
\end{LEM}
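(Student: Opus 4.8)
The plan is to derive the lemma directly from Theorem~\ref{mainthm} applied to $G = C_n$, so the first task is to verify the hypotheses. I would begin by showing $\chi_P(C_n) = 2$. Since $C_n$ is triangle-free and $n \ge 5$, every proper induced subgraph is bipartite; moreover, for any enumeration $\sigma = v_1,\dots,v_n$ of $V(C_n)$, each bag of $P^{C_n}_\sigma$ satisfies $X_\ell \subseteq \{v_\ell, v_{\ell+1}, \dots, v_n\}$, so $X_\ell \ne V(C_n)$ (for $\ell \ge 2$ this set is proper, and $X_1 = N[v_1]$ has only three vertices). Hence every bag is a proper induced subgraph, so $\chi(X_\ell) \le 2$, and as $C_n$ has an edge we get $\chi(P^{C_n}_\sigma) = 2$ for \emph{every} $\sigma$; by Lemma~\ref{enumeration} this gives $\chi_P(C_n) = 2$. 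Thus $k := \chi_P(C_n) = 2$, and the hypothesis $m \ge n+4$ is exactly $m \ge n + k + 2$, so Theorem~\ref{mainthm} applies. It then suffices to prove that $C_n$ admits \emph{no} special enumeration, since in that case Theorem~\ref{mainthm} yields $\chi_P(R_m(C_n)) = k+1 = 3$, as claimed.

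For the core step I would argue by contradiction via a parity count. Suppose $\sigma = v_1,\dots,v_n$ were a special enumeration, and for each $i$ let $b_i$ be the number of neighbors of $v_i$ among $v_1,\dots,v_{i-1}$; since $C_n$ is $2$-regular, $b_i \in \{0,1,2\}$. The key observation is that specialness forbids $b_i = 1$: if $v_i$ had exactly one backward neighbor, then its unique forward neighbor $v_j$ (with $j > i$) lies in $N[\{v_1,\dots,v_i\}]$ and $j > i-1$, so both $v_i$ and $v_j$ belong to $X_i$; the edge $v_iv_j$ then forces $\chi(X_i) = 2 = \chi_P(C_n)$, and the second condition in the definition of a special enumeration would require $v_i$ to have no backward neighbor, contradicting $b_i = 1$. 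Hence $b_i \in \{0,2\}$ for all $i$, so $\sum_{i} b_i$ is even. But $\sum_i b_i$ counts each edge of $C_n$ exactly once, at its later endpoint, so $\sum_i b_i = |E(C_n)| = n$, which is odd. This contradiction shows that no special enumeration of $C_n$ exists, completing the proof.

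The two points requiring care are both routine consequences of the definition of $P^{C_n}_\sigma$: that no bag $X_\ell$ can be the whole odd cycle (so every bag is bipartite and $\chi(X_\ell) \le 2$), which follows from the inclusion $X_\ell \subseteq \{v_\ell,\dots,v_n\}$, and that the edge $v_iv_j$ to a forward neighbor genuinely sits inside $X_i$. I expect the main conceptual step — and the only place where oddness of $n$ is used — to be the parity mismatch between the necessarily even sum $\sum_i b_i$ and the odd edge-count $n$; everything else is bookkeeping against the definitions of special enumeration and of $P^{C_n}_\sigma$.
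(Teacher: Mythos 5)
Your proposal is correct and follows essentially the same route as the paper: verify $\chi_P(C_n)=2$, invoke Theorem~\ref{mainthm}, and show that a special enumeration would force every vertex of $C_n$ to have both neighbors before it or both after it in $\sigma$. The only (cosmetic) difference is in the final contradiction: the paper observes that the vertices with two backward neighbors and those with two forward neighbors form two stable sets, making the odd cycle bipartite, whereas you count $\sum_i b_i = n$ and note it cannot be even --- both are one-line uses of the oddness of $n$.
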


\begin{proof}
Evidently, $\chi_P(C_n)=2$. Hence, by Theorem~\ref{mainthm}, it is enough to show that every enumeration $\sigma = v_1, \dots, v_n$ of $V(C_n)$ is not special.  Let $P_{\sigma}^G=X_1,X_2,\ldots,X_n$.

Let $(L,M,R)$ be the partition of $V(C_n)$ such that for every $v\in V(C_n)$,
\begin{itemize}
\item $v \in L$ if both neighbors of $v$ come before $v$ in $\sigma$,
\item $v\in R$ if both neighbors of $v$ come after $v$ in $\sigma$,
\item $v \in M$ otherwise.
\end{itemize}

Suppose $M$ is not empty and let $v_{\ell}$ be a vertex of $M$. Obviously, the chromatic number of $\tuple{X_{\ell}}$ is at least 2 because it contains both $v_{\ell}$ and a neighbor of $v_{\ell}$. However, $v_{\ell}$  has a neighbor appearing before $v_{\ell}$ in $\sigma$, so $\sigma$ is not special.  
So, we may assume $M$ is empty. 
Since $L$ and $R$ are both stable sets, it follows that $C_n$ is 2-colorable, a contradiction.
This completes the proof.
\end{proof}

On the other hand, we also have the following easy lemma. 

\begin{LEM} \label{c5treechi}
For all integers $n \ge 4$ and all positive integers $m$, $R_m(C_n)$ has tree-chromatic number $2$.  
\end{LEM}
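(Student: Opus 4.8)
The plan is to prove the two bounds $\chi_T(R_m(C_n)) \ge 2$ and $\chi_T(R_m(C_n)) \le 2$ separately. The lower bound is immediate: $R_m(C_n)$ contains an edge (for instance $(1,v_0)(1,u_1)$), and in any tree-decomposition some bag contains both endpoints of this edge, so that bag has chromatic number at least $2$. All the work is therefore in exhibiting a single tree-decomposition all of whose bags induce bipartite subgraphs. Since Lemma~\ref{cycle} shows that for odd $n$ no \emph{path}-decomposition achieves this, the decomposition I build must genuinely branch; this is exactly the phenomenon the paper is after.

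Write $u_1,\dots,u_n$ for the vertices of $C_n$ (indices mod $n$) and, for $j\in[n]$, set $I_j=\{(i,u_j):i\in[m]\}$. I would take the tree $T$ to be a star: a central node carrying the bag $Z=I_1\cup I_2\cup\cdots\cup I_{n-1}$, together with one leaf node for each copy $i\in[m]$ carrying the bag $A_i=I_1\cup I_{n-1}\cup\{(i,v_0)\}\cup\{(i,u_1),\dots,(i,u_n)\}$, each $A_i$ joined to $Z$. The idea is that $Z$ handles all ``path'' cross-edges between $I_j$ and $I_{j+1}$ with $1\le j\le n-2$ at once, while the wrap-around cross-edges between $I_{n-1},I_n$ and between $I_n,I_1$, together with the entire star at $(i,v_0)$, are pushed into the leaf bag $A_i$. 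First I would check this is a valid tree-decomposition. For coverage, $Z$ contains all cross-edges among $u_1,\dots,u_{n-1}$, and inside $A_i$ the vertex $(i,u_n)$ meets $I_1$ and $I_{n-1}$ (covering the two wrap-around cycle-edges) while $(i,v_0)$ meets every $(i,u_j)$ (covering the star). For connectivity the key observation is that $I_1$ and $I_{n-1}$ lie in every bag, so those vertices are trivially fine; each remaining vertex is either $(i,v_0)$ or $(i,u_n)$, living only in $A_i$, or $(i,u_j)$ with $2\le j\le n-2$, living in exactly the two adjacent bags $A_i$ and $Z$. The star shape is essential here, since every leaf is adjacent to the centre.

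Next I would verify each bag is bipartite. The bag $Z$ is a blow-up of the path $u_1u_2\cdots u_{n-1}$ (each vertex replaced by the independent set $I_j$, each edge by a complete bipartite graph minus a matching), so two-colouring by the parity of $j$ works; here I use $n\ge 4$ to know $u_1u_{n-1}\notin E(C_n)$, so no extra edge spoils the path structure. For $A_i$, colour all copy-$i$ vertices $(i,u_1),\dots,(i,u_n)$ with one colour and everything else (namely $(i,v_0)$ and the other copies' vertices in $I_1\cup I_{n-1}$) with the other. Every star-edge is then bichromatic, every cross-edge inside $A_i$ joins a copy-$i$ vertex to a vertex of a different copy, and the only edge that could lie inside the second colour class would lie inside $I_1\cup I_{n-1}$, i.e.\ between $u_1$ and $u_{n-1}$ — impossible for $n\ge 4$. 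Crucially this uses that within a single copy the only edges are the $v_0$-star, so the copy-$i$ colour class is independent.

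The main obstacle, and the only genuinely delicate point, is reconciling the connectivity condition for the ``middle'' vertices $(i,u_j)$ with $2\le j\le n-2$ with the requirement that $(i,v_0)$ meet all of them: in a path-decomposition these middle vertices are forced far from the bag carrying $(i,v_0)$, and no single placement keeps them all connected — this is the real reason the path-chromatic number is pushed up. Branching into a star resolves it, by letting every leaf bag $A_i$ be simultaneously adjacent to the one central bag $Z$, so that a middle vertex may sit in both $A_i$ and $Z$ and still induce a connected subtree. A secondary point to get right is the wrap-around edges $u_{n-1}u_n$ and $u_nu_1$, which I handle entirely inside the leaf bags; this stays bipartite only because a single copy contributes no edges beyond its star. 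Once these checks are in place, combining them with the lower bound gives $\chi_T(R_m(C_n))=2$.
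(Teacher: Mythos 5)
Your proof is correct and follows essentially the same approach as the paper: a star-shaped tree-decomposition whose central bag is the blow-up of a path on $n-1$ cycle vertices and whose leaf bags each contain one full copy together with all copies of the two neighbours of the omitted cycle vertex. Up to relabelling (the paper omits $v_1$ from the centre where you omit $u_n$), your bags coincide with the paper's $B_c$ and $B_{\ell(s)}$, and your verification of validity and $2$-colourability matches theirs.
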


\begin{proof}
It clearly suffices to show that $R_m(C_n)$ has tree-chromatic number at most $2$.  Let $V(C_n)=\{v_1, \dots, v_n\}$ with $v_j$ adjacent to $v_{j'}$ if and only if $|j-j'| \in \{1,n-1\}$. Let the vertex set of $R_m(C_n)$ be $\{(i,v_j)\mid i \in [m], j\in \{0\} \cup [n]\}$. We now describe a tree-decomposition $(T, \mathcal{B})$ of $R_m(C_n)$. Let $T$ be a star with a center vertex $c$ and $m$ leaves $\ell(1),\ldots,\ell(m)$. Let 
\begin{itemize}
\item 
$B_c=\{(i,v_j) \mid i \in [m], j\in \{2,3,\ldots,n\}\}$, 
\item
$B_{\ell(s)}=\{(s,v_j) \mid j \in \{0,1,2,\ldots,n\}\} \cup \{(i,v_j)\mid i \in [m], j\in \{2,n\}\}$.

\end{itemize}
We claim that $(T,\mathcal{B})$ is a tree-decomposition of $R_m(C_n)$ where $\mathcal{B}=\{ B_t \mid t \in V(T)\}$. 
For $i \in [m]$ and $v_j \in V(C_n)\cup \{v_0\}$, the vertex $(i,v_j)$ of $R_m(C_n)$ belongs to $B_{\ell(i)}$.
If two distinct vertices $(i,v_j)$ and $(i',v_{j'})$ of $R_m(C_n)$ are adjacent, then either $i=i'$ and one of $v_j$ and $v_{j'}$ is $v_0$ or $i\neq i'$, $j,j'\in [n]$ and $|j-j'| \in \{1,n-1\}$. 
If the first case holds, then both vertices belong to $B_{\ell(i)}$. If the second case holds, 
then if either $v_j=v_1$ or $v_{j'}=v_1$ then both vertices belong to $B_{\ell(i)}$, and if neither $v_j$ nor $v_{j'}$ is $v_1$, then both belong to $B_c$. 
Lastly, for $(i,v_j) \in R_m(C_n)$, if $v_j \notin \{v_0,v_1\}$ then $(i,v_j)$ belongs to $B_c$, so $\{t \mid (i,v_j) \in B_t\}$ automatically induces a subtree in $T$. 
If $v_j$ is either $v_0$ or $v_1$, then only $B_{\ell(i)}$ contains $(i,v_j)$. Hence, $(T,\mathcal{B})$ is a tree-decomposition, as claimed.  

The set of vertices $(i,v_j)$ of $B_c$ with even $j$ (or odd $j$)  is independent. Hence, $\chi(B_c)$ is at most $2$. Moreover, for $i\in [m]$, both of $\{(i,v)\mid  v \in V(C_n)\}$ (note $v_0 \notin V(C_n)$) and $B_{\ell(i)}\setminus \{(i,v)\mid v\in V(C_n)\}$ are independent, so $\chi(B_{\ell(i)})$ is at most $2$. We conclude that $(T,\mathcal{B})$ has chromatic number at most $2$. This completes the proof.
\end{proof}

\begin{proof}[Proof of Corollary~\ref{notequal}]
For every odd integer $n \ge 5$ and every integer $m\ge n+4$,  
Lemma~\ref{cycle} and Lemma~\ref{c5treechi} show that the tree-chromatic number and path-chromatic number of $R_m(C_{n})$ are different.
To complete the proof, we prove that $R_m(C_{n})$ is $k$-connected for every $n \ge k$ and $m \geq n+4$.   
We prove that for every set $U$ of vertices of $R_m(C_{n})$ of size at most $k-1$, 
$R_m(C_{n}) - U$ is connected. 
Again, let $V(C_n)=\{v_1,v_2,\ldots,v_n\}$ with $v_j$ adjacent to $v_{j'}$ if $|j-j'| \in \{1,n-1\}$ and $V(R_m(C_n))=\{(i,v_j)\mid i \in [m], j \in \{0\} \cup [n]\}$.

Since $m>|U|$, there exists $i^* \in [m]$ such that no vertex in $\{(i^*,v_j)\mid j \in \{0\}\cup [n]\}$ belongs to $U$.
Without loss of generality, $i^*=1$. 
We claim that for every vertex $(i,v_j)$ of $R_m(C_{n}) - U$, there is a path from $(1,v_0)$ to $(i,v_j)$. 
We may assume that $(i,v_j) \neq (1,v_0)$. 
If $i=1$, then $(1,v_0),(1,v_j)$ is a path. Hence, we may assume that $i\neq 1$. If $v_j\neq v_0$, then $(1,v_0), (1,v_{j+1}), (i,v_j)$ is a path, where $(1,v_{n+1})=(1,v_1)$.
If $v_j=v_0$, there exists $j' \in [n]$ such that $(i,v_{j'}) \notin U$ since $n > |U|$. Then $(1,v_0),(1,v_{j'+1}),(i,v_{j'}), (i,v_0)$ is a path. Therefore, $R_m(C_n) - U$ is connected. This completes the proof.
\end{proof}



Recall that $M_k$ denotes the $k$-Mycielski graph.  

\begin{LEM}\label{Mycielskian}
For all positive integers $n,m$ and all integers $r\ge m+n$,  $M_r$ contains $R_m(M_n)$ as an induced subgraph.
\end{LEM}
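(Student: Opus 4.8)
The plan is to write $M_r$ as an iterated Mycielskian of $M_n$ and then exhibit an explicit induced copy of $R_m(M_n)$ using coordinates. The recursive construction in the statement is exactly the Mycielskian operation, which I will denote $\mu$: here $w$ is the \emph{apex}, the vertices $u_i$ form an independent \emph{shadow} copy, and $u_i$ is joined to the $M_n$-neighbours of $v_i$ but not to $v_i$ itself. Hence $M_r=\mu^{\,r-n}(M_n)$; writing $s=r-n\ge m$, it suffices to embed $R_m(M_n)$ as an induced subgraph of $\mu^{s}(M_n)=M_r$.

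First I would fix coordinates for $\mu^{s}(M_n)$. Each application of $\mu$ replaces every vertex by a bottom copy and a shadow copy and adds one new apex, so the vertices of $\mu^{s}(M_n)$ split into the $M_n$-based vertices $(x,\vec b)$ with $x\in V(M_n)$ and $\vec b\in\{0,1\}^{s}$ (coordinate $k$ records bottom ($0$) or shadow ($1$) at the $k$-th application) together with the apexes $a_1,\dots,a_s$ and their descendants. Unwinding the definition of $\mu$, the three adjacency facts I would record are: (i) two $M_n$-based vertices $(x,\vec b)$ and $(y,\vec c)$ are adjacent iff $xy\in E(M_n)$ and the supports $\{k:b_k=1\}$, $\{k:c_k=1\}$ are disjoint; (ii) the apex $a_j$, taken with all-bottom choices at the later steps, is adjacent to $(y,\vec c)$ iff $c_j=1$; and (iii) any two such apexes $a_j,a_{j'}$ are non-adjacent. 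Fact (i) comes from the shadow--shadow edges being forbidden at every level, and (ii) from $a_j$ being joined precisely to the level-$j$ shadows.

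With these facts the embedding is forced and easy to check. For $i\in[m]$ let $e_i\in\{0,1\}^{s}$ be the indicator of $\{i\}$ (this makes sense because $s\ge m$), and define the map by $(i,v)\mapsto (v,e_i)$ for $v\in V(M_n)$ and $(i,v_0)\mapsto a_i$. I would then verify the adjacencies. By (i), $(v,e_i)$ and $(v',e_{i'})$ are adjacent iff $vv'\in E(M_n)$ and $\{i\}\cap\{i'\}=\emptyset$, i.e.\ iff $vv'\in E(M_n)$ and $i\ne i'$, which is exactly the rule for the non-$v_0$ vertices of $R_m(M_n)$; in particular two vertices of the same layer have equal, hence non-disjoint, supports and are never adjacent. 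By (ii), $a_i$ is adjacent to $(v,e_{i'})$ iff $(e_{i'})_i=1$, i.e.\ iff $i'=i$, so $a_i$ is joined to all of layer $i$ and to no other vertex of the image. By (iii) the apex images form a stable set, matching the $v_0$-vertices of $R_m(M_n)$. The map is injective and its image induces precisely $R_m(M_n)$, completing the embedding.

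The one genuinely technical point --- and the main obstacle --- is to establish the adjacency facts (i)--(iii) rigorously, since each $\mu$ simultaneously deletes the shadow--shadow edges (yielding the disjoint-support condition) and attaches the fresh apex to the current shadows (yielding the $c_j=1$ condition), and one must track how these interact over all $s$ levels. A direct induction on $m$ is possible instead: peel off the outermost $\mu$, embed $R_{m-1}(M_n)$ into the bottom copy of $M_{r-1}$ by induction, and build the new layer from the outer apex together with a family of shadow vertices. There, however, the same difficulty resurfaces as the task of finding, for each $v$, a vertex of $M_{r-1}$ whose neighbourhood meets the embedded copy in exactly the image of $\{(i,v'):i<m,\ v'\in N_{M_n}(v)\}$. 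The explicit coordinates identify this vertex as the all-bottom copy $(v,\vec 0)$, which is why I prefer the coordinate approach.
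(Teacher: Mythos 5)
Your embedding selects exactly the same vertex set as the paper's proof: for each $i\in[m]$, the apex introduced at the $(n+i)$-th Mycielski level together with the level-$(n+i)$ shadows of the original vertices of $M_n$ (your $(v,e_i)$ and $a_i$ are the paper's $v^{n+i}_y$ and $v^{n+i}_0$ under the nested bottom-copy identification). The argument is correct and essentially identical in approach; you merely verify the adjacencies more explicitly via the binary-coordinate description, where the paper asserts the isomorphism directly.
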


\begin{proof}
Take a sequence $G_n, G_{n+1},\cdots,G_r$ of induced subgraphs of $M_r$ where $G_i$ is isomorphic to $M_i$ and $G_i$ is an induced subgraph of $G_{i+1}$ for $i=n,n+1,\ldots,r-1$. Let $V(G_n)= \{v^n_1,v^n_2,\ldots,v^n_{3\cdot 2^{n-2}-1}\}$, and  for $s > n$, let $V(G_s) \setminus V(G_{s-1})=\{v^s_0,v^s_1,\ldots,v^s_{3\cdot 2^{s-3}-1}\}$ where $v^s_0$ is complete to the other vertices in this set and $v^s_{i}$ corresponds to $v^n_i$ for $ 1\le i \le 3\cdot 2^{n-2}-1$. Then, the graph induced by $\{v^x_y \mid n+1\le x \le m+n , 0\le y \le 3 \cdot 2^{n-2}-1\}$ is isomorphic to $R_m(M_n)$.
\end{proof}

Lemma \ref{Mycielskian} and Theorem \ref{mainsec} together imply Corollary \ref{unbounded}.  Thus, it only remains to prove Theorems \ref{mainthm} and  \ref{mainsec}, which we do in the remaining section.

\section{Proofs of Theorems \ref{mainthm} and \ref{mainsec}}

In this section, we prove Theorem \ref{mainthm} and Theorem \ref{mainsec}.  
Throughout this section, $G$ is a graph with $n$ vertices and $R_m(G)$ has vertex set $\{(i,v) \mid  i \in [m],  v\in V(G)\cup\{v_0\}\}$.  
For $I \subseteq [m]$ and $U \subseteq V(G)\cup\{v_0\}$,  we set $[I,U]=\{(i,v) \mid i \in I, v\in U\}$.
We start with the following lemmas.

\begin{LEM}\label{isomorphic}
For $I \subseteq [m]$ and $U\subseteq V(G)$, suppose $|I| \ge \chi(U)$. Then there exists a map $f:U \to [I,U]$ such that 
\begin{itemize}
\item for every $v \in U$, $f(v)$ belongs to $[I,\{v\}]$, and
\item $f$ is an isomorphism from $\tuple{U}$ to $\tuple{f(U)}$.
\end{itemize}
Furthermore,  for all $i^* \in [m]\setminus I$ and all $v^* \in V(G)\setminus U$, $\tuple{[I,U]\cup \{(i^*,v^*)\}}$ contains an isomorphic copy of $\tuple{U\cup \{v^*\}}$ as an induced subgraph. 
\end{LEM}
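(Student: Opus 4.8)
The key structural observation is that within $R_m(G)$, if we restrict attention to vertices whose second coordinate lies in $V(G)$ (that is, we avoid $v_0$), then two such vertices $(i,v)$ and $(i',v')$ are adjacent if and only if $i \neq i'$ and $vv' \in E(G)$. In particular, two original vertices in the \emph{same} layer are never adjacent, whereas two original vertices in \emph{different} layers inherit exactly the adjacency of $G$. This is precisely the property that makes a proper coloring the right tool, and it is why the hypothesis $|I| \ge \chi(U)$ enters.

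For the first part, since $|I| \ge \chi(U)$, I would fix a proper coloring $c$ of $\tuple{U}$ whose set of colors is a subset of $I$, and define $f(v) = (c(v),v)$ for each $v \in U$. Then $f(v) \in [I,\{v\}]$ by construction, and $f$ is injective. To verify that $f$ is an isomorphism, take $u,v \in U$. If $uv \in E(G)$, then $c(u) \neq c(v)$ by properness, so $f(u)$ and $f(v)$ lie in distinct layers with $uv \in E(G)$ and are therefore adjacent; conversely, if $f(u)$ and $f(v)$ are adjacent, then they must lie in distinct layers (same-layer original vertices being nonadjacent) and satisfy $uv \in E(G)$, whence $uv \in E(\tuple{U})$. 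Thus $f$ both preserves and reflects adjacency.

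For the furthermore part, I would extend $f$ by setting $g|_U = f$ and $g(v^*) = (i^*,v^*)$. Since $i^* \notin I$ while every first coordinate appearing in $f(U)$ lies in $I$, the vertex $g(v^*)$ occupies a layer distinct from all of $f(U)$; hence $g$ remains injective, and for each $u \in U$ the vertices $g(u)$ and $g(v^*)$ lie in different layers. Consequently $g(u)g(v^*)$ is an edge of $R_m(G)$ if and only if $u v^* \in E(G)$, which matches the adjacency in $\tuple{U \cup \{v^*\}}$. Together with the first part, this shows $g$ is an isomorphism from $\tuple{U \cup \{v^*\}}$ onto the subgraph induced by its image, which is contained in $[I,U] \cup \{(i^*,v^*)\}$.

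There is no serious obstacle here: the whole argument rests on the single observation that adjacency among the original vertices of $R_m(G)$ is governed by distinctness of layers together with adjacency in $G$, and the assumption $|I| \ge \chi(U)$ is exactly what supplies enough layers to realize a proper coloring. The only point demanding a little care is to confirm in both directions that a proper coloring neither places two adjacent vertices of $U$ in a common layer nor forces two nonadjacent vertices into an edge; both follow immediately from the definition of $R_m(G)$.
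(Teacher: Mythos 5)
Your proposal is correct and follows essentially the same approach as the paper: choosing a proper coloring of $\tuple{U}$ with colors drawn from $I$ (equivalently, the paper's partition into independent sets assigned to distinct indices of $I$), setting $f(v)=(c(v),v)$, and extending by $v^* \mapsto (i^*,v^*)$ using $i^* \notin I$ to keep the new vertex in a fresh layer. The adjacency verification in both directions matches the paper's argument.
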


\begin{proof}
Let $\chi(U)=c$. Let $\mathcal{U}=(U_1,U_2,\ldots,U_c)$ be a partition of $U$ into independent sets of $G$.
Take $c$ distinct elements from $I$, say $i_1,i_2,\ldots,i_c$, and for $v\in U$, let $f(v)=(i_s,v)$ if $v\in U_s$.
We claim that $f$ is an isomorphism from $\tuple{U}$ to $\tuple{f(U)}$.

Let $v$ and  $v'$ be distinct vertices in $U$.  If $v$ and $v'$ are adjacent, they are contained in distinct classes of $\mathcal{U}$, so $f(v)$ and $f(v')$ are adjacent by the definition of $R_m(G)$. If $v$ and $v'$ are non-adjacent, there are no edges between $[I,\{v\}]$ and $[I,\{v'\}]$. Hence, $f(v)$ and $f(v')$ are non-adjacent.
Thus, $f$ is an isomorphism from $\tuple{U}$ to $\tuple{f(U)}$.

For the last part, let $i^* \in [m]\setminus I$ and $v^* \in V(G)\setminus U$. Let $f^*$ be the map obtained from $f$ by adding $f^*(v^*)=(i^*,v^*)$. Since $i^*\not \in I$, it easily follows that $f^*$ is an isomorphism from $\tuple{U\cup \{v^*\}}$ to $\tuple{f^*(U\cup \{v^*\})}$.
This completes the proof.
\end{proof}

When considering $k$-colorings of a graph, we always use $[k]$ for the set of colors.  

\begin{LEM}\label{coloring}
For $I \subseteq [m]$ and $U \subseteq V(G)$, let $\chi( U)=c$. If  $|I|\geq c$, the chromatic number of $\tuple{[I,U]}$ is $c$. Moreover, if  $|I| > c$, then for every $c$-coloring $C$ of $\tuple{[I,U]}$ and every $i \in I$,  $[\{i\},U]$ uses all $c$ colors of $C$. In other words, $C([\{i\},U])=[c]$ for every $i\in I$.
\end{LEM}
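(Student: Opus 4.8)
The plan is to prove the two assertions separately, with the first being routine and the second carrying the real content. For the equality $\chi(\tuple{[I,U]}) = c$, the lower bound $\chi(\tuple{[I,U]}) \ge c$ is immediate from Lemma~\ref{isomorphic}, since $|I| \ge c = \chi(U)$ guarantees that $\tuple{[I,U]}$ contains an induced copy of $\tuple{U}$. For the upper bound I would fix a proper $c$-colouring $\phi$ of $\tuple{U}$ and pull it back by setting $C(i,v) = \phi(v)$ for all $(i,v) \in [I,U]$. Since $U \subseteq V(G)$, no vertex of $[I,U]$ has the form $(i,v_0)$, so two vertices $(i,v),(i',w) \in [I,U]$ are adjacent only when $i \ne i'$ and $vw \in E(G)$; in that case $\phi(v) \ne \phi(w)$, so $C$ is proper and $\chi(\tuple{[I,U]}) \le c$.

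For the ``moreover'' part, first I would record the single structural fact that drives everything. For $v \in U$ and a colour $\gamma$, let $m_v(\gamma)$ be the number of $i \in I$ with $C(i,v) = \gamma$, so $\sum_\gamma m_v(\gamma) = |I| > c$; hence some colour is repeated on $v$, and I write $R(v) = \{\gamma : m_v(\gamma) \ge 2\} \ne \emptyset$. The key observation is a \emph{rigidity} statement: if $vw \in E(G[U])$ then, for any $\gamma$, a pair $(i,v),(i',w)$ with $i \ne i'$ is adjacent in $\tuple{[I,U]}$ and so cannot be monochromatic; therefore $C(i,v) = \gamma = C(i',w)$ forces $i = i'$. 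Consequently, if $\gamma \in R(v)$ (so $v$ is coloured $\gamma$ on at least two columns) then $w$ cannot be coloured $\gamma$ on any column. In short, $R(v)$ is disjoint from the set of colours appearing on $w$ whenever $vw \in E(G[U])$.

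The main step is to derive a contradiction from a column that omits a colour. Suppose some column $i_0$ does not use all $c$ colours; relabelling, assume colour $c$ is absent from column $i_0$, so $C(i_0,v) \in [c-1]$ for every $v \in U$. The plan is to manufacture a proper $(c-1)$-colouring $\Phi$ of $\tuple{U}$, contradicting $\chi(U) = c$. Let $B = \{v \in U : R(v) = \{c\}\}$ be the vertices whose only repeated colour is $c$. Using rigidity, $B$ is a stable set: if $v \in B$ then $c \in R(v)$, so every neighbour $w$ of $v$ avoids colour $c$ entirely, whence $c \notin R(w)$ and (as $R(w) \ne \emptyset$) $w \notin B$. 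Now define $\Phi(v)$ to be any colour of $R(v) \setminus \{c\}$ when $v \notin B$, and $\Phi(v) = C(i_0,v)$ when $v \in B$; in both cases $\Phi(v) \in [c-1]$. For an edge $vw$ with $v,w \notin B$, rigidity gives that $w$ avoids $\Phi(v) \in R(v)$ while $\Phi(w) \in R(w)$ is used by $w$, so $\Phi(v) \ne \Phi(w)$; for an edge $vw$ with $v \in B$ (so $w \notin B$), rigidity applied to $\Phi(w) \in R(w)$ shows $v$ avoids $\Phi(w)$ on every column, in particular $C(i_0,v) \ne \Phi(w)$; and there are no edges inside $B$. Thus $\Phi$ is proper, which is the desired contradiction, so every column uses all $c$ colours.

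The main obstacle I anticipate is precisely the treatment of the set $B$: the naive choice ``colour $v$ by a repeated colour other than $c$'' fails for vertices whose \emph{only} repeated colour is $c$, and one must both show such vertices form a stable set and find alternative colours for them that remain consistent with the rest of $\Phi$. The resolution above is to borrow their colours from column $i_0$ (which is legitimate because column $i_0$ avoids $c$) and to verify compatibility once more through the rigidity statement. Everything else—the pigeonhole giving $R(v) \ne \emptyset$ and the two properness checks—is routine once rigidity is in hand.
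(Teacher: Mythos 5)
Your proof is correct, and while the first assertion is handled just as in the paper (upper bound by pulling back an optimal colouring of $\tuple{U}$ along the columns, lower bound from Lemma~\ref{isomorphic}), your argument for the ``moreover'' part is genuinely different. The paper fixes a column $i$, uses Lemma~\ref{isomorphic} to embed an induced copy $F$ of $\tuple{U}$ into the remaining columns $[I\setminus i, U]$, invokes the standard fact that in an optimal $c$-colouring every colour class contains a vertex adjacent to all other classes, and then transfers that vertex's forced colour to column $i$ (since $(i,v)$ and $f(v)$ have the same neighbours in $F$); this directly exhibits each colour inside $[\{i\},U]$. You instead argue by contraposition via a recolouring: your rigidity observation (a colour repeated on $v$ across two columns cannot appear on any neighbour of $v$ in any column), the pigeonhole consequence $R(v)\neq\emptyset$ from $|I|>c$, and the careful treatment of the set $B$ of vertices whose only repeated colour is the missing one together produce a proper $(c-1)$-colouring of $\tuple{U}$, contradicting $\chi(U)=c$. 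Both arguments are sound. The paper's is shorter and reuses Lemma~\ref{isomorphic}, which keeps the machinery uniform across the section; yours is more self-contained for the second assertion (no embedding needed) and extracts more structural information about arbitrary $c$-colourings of $\tuple{[I,U]}$, at the cost of the extra bookkeeping around $B$, which you correctly identified as the delicate point and resolved properly.
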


\begin{proof}
Let $(U_1,U_2,\ldots,U_c)$ be a partition of $U$ into independent sets of $G$.
Then, $([I,U_1],[I,U_2],\ldots,[I,U_c])$ is a partition of $[I,U]$ and each set is independent in $\tuple{[I,U]}$. Hence, the chromatic number of $\tuple{[I,U]}$ is at most $c$. 
On the other hand, $\chi([I,U]) \geq c$ follows from Lemma \ref{isomorphic}.
Thus, the chromatic number of $\tuple{[I,U]}$ is $c$.

For the second part, let $C:[I,U] \to [c]$ be a $c$-coloring of $\tuple{[I,U]}$. Fix $i \in I$. Since $|I\setminus i|$ is still greater than or equal to $c$, we can apply Lemma~\ref{isomorphic} to $[I\setminus i,U]$. Let $f$ be a map from $U$ to $[I\setminus i,U]$ as in the statement of Lemma~\ref{isomorphic}. 
Let $F=f(U)$, and $C_F$ be the restriction of $C$ on $F$. 
As $\tuple{f(U)}$ is not $(c-1)$-colorable, for each color $\alpha \in [c]$, there must be a vertex $v_{\ell_{\alpha}} \in U$ such that  $f(v_{\ell_{\alpha}}) \in C_F^{-1}(\alpha)$ and $f(v_{\ell_{\alpha}})$ has a neighbor in $C_F^{-1}(\beta)$ for every $\beta \in [c]\setminus \alpha$.  
Then, $(i,v_{\ell_{\alpha}})$ also has a neighbor in $C_F^{-1}(\beta)$ for every $\beta \in [c]\setminus \alpha$, so $C((i,v_{\ell_{\alpha}}))$ is $\alpha$.  
Hence, $[\{i\},U]$ sees all colors, which proves the second part.
\end{proof}

\begin{LEM}\label{special}
For a graph $G$ with path-chromatic number $k \geq 2$, let $\sigma=v_1,v_2,\ldots,v_n$ be a special vertex enumeration of $G$. 
Let $P_{\sigma}^{G}=X_1,X_2,\ldots,X_n$.
For $j\in [n]$, if $\chi(X_{j})=k$ then $\chi(X_j \setminus v_j) = k-1$.
\end{LEM}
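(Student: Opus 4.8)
The plan is to prove the two inequalities $\chi(X_j \setminus v_j) \ge k-1$ and $\chi(X_j\setminus v_j)\le k-1$ separately. The lower bound is immediate: deleting a single vertex lowers the chromatic number by at most one, so $\chi(X_j \setminus v_j) \ge \chi(X_j) - 1 = k-1$ (this also forces $X_j \setminus v_j \neq \emptyset$). Since $X_j \setminus v_j$ induces a subgraph of $\tuple{X_j}$ we also have $\chi(X_j \setminus v_j) \le k$, so the entire content is to rule out $\chi(X_j \setminus v_j) = k$.

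Before that I would record one elementary structural fact about $X_j \setminus v_j$. By definition $X_j = N[\{v_1,\ldots,v_j\}]\setminus\{v_1,\ldots,v_{j-1}\}$, so any vertex $v_c \in X_j\setminus v_j$ has $c \ge j+1$ (it is not among $v_1,\ldots,v_j$), and since it does not lie in $\{v_1,\dots,v_j\}$ it must be adjacent to some earlier vertex $v_i$ with $i\le j$. Thus \emph{every} vertex of $X_j\setminus v_j$ has index exceeding $j$ and has a neighbor with index at most $j$.

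Now I would argue by contradiction, assuming $\chi(X_j\setminus v_j)=k$. Let $b$ be the smallest index with $v_b \in X_j \setminus v_j$; by the previous paragraph $b>j$. The key step is the containment $X_j\setminus v_j \subseteq X_b$: for any $v_c \in X_j\setminus v_j$ we have $c\ge b$, hence $v_c \notin \{v_1,\dots,v_{b-1}\}$, while $v_c$ is adjacent to some $v_i$ with $i\le j\le b$ (for $c=b$ this just says $v_b\in X_b$), so $v_c \in N[\{v_1,\ldots,v_b\}]\setminus\{v_1,\ldots,v_{b-1}\}=X_b$. Consequently $\chi(X_b)\ge \chi(X_j\setminus v_j)=k$, and since $\chi(P_\sigma^G)=k$ gives $\chi(X_b)\le k$, we conclude $\chi(X_b)=k$. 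Finally I would invoke specialness of $\sigma$: because $\chi(X_b)=k=\chi_P(G)$, the defining vertex $v_b$ has no neighbor in $\{v_1,\ldots,v_{b-1}\}$. But $v_b\in X_j\setminus v_j$ is adjacent to some $v_i$ with $i\le j<b$, a vertex of $\{v_1,\ldots,v_{b-1}\}$ — a contradiction.

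I expect the only genuinely non-routine step to be the containment $X_j\setminus v_j\subseteq X_b$; the whole proof hinges on choosing $b$ to be the \emph{minimum} index occurring in $X_j\setminus v_j$, since this single choice simultaneously makes the containment go through (every other member of $X_j\setminus v_j$ has strictly larger index and so survives into $X_b$) and turns $v_b$ into a vertex that is forced to have an earlier neighbor, directly clashing with the specialness condition at the now necessarily $k$-chromatic bag $X_b$.
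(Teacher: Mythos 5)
Your proof is correct and follows essentially the same route as the paper: both arguments hinge on taking the minimum index $b$ (the paper calls it $j'$) occurring in $X_j\setminus v_j$, establishing the containment $X_j\setminus v_j\subseteq X_b$, and invoking the specialness of $\sigma$ at $X_b$ via the neighbor $v_i$ with $i\le j<b$. The only difference is presentational — the paper applies specialness directly to conclude $\chi(X_{b})\le k-1$, whereas you run the same implication as a contradiction.
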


\begin{proof}
It is obvious that $\chi(X_j \setminus v_j) \ge k-1$.
We may assume that $X_j \setminus v_j \neq \emptyset$.
Let $j'$ be the smallest index such that $v_{j'} \in X_j\setminus v_j$. Note that $j'>j$ and since $v_{j'}$ is contained in $X_{j}$, it has a neighbor in $\{v_1,\ldots,v_{j}\}$. Hence, by the definition of a special vertex enumeration, $\chi(X_{j'}) \leq k-1$. However, by the choice of $j'$,  $X_{j} \setminus v_j$ is a subset of $X_{j'}$.
Thus, $\chi(  X_{j} \setminus v_j)\le k-1$, as required.
\end{proof}

For an enumeration $\sigma$ of vertices and a vertex $v$, let $\sigma(<v)$ denote the set of vertices which come before $v$ in $\sigma$ and $\sigma(\le v)=\sigma(<v)\cup \{v\}$.

\begin{LEM}\label{bound}
Let $m\ge n+1$ and $\mu=(i_1,v_{j_1}),(i_2,v_{j_2}),\ldots, (i_{m(n+1)}, v_{j_{m(n+1)}})$ be an enumeration of the vertices of $R_m(G)$. Let $k$ be the chromatic number of $P_{\mu}^{R_m(G)}$.  
For each $v\in V(G)$, let $t(v)$ be the vertex in $[[m],\{v\}]$ which comes first in $\mu$. Suppose that for all $1\le j < j' \le n$, 
$t(v_j)$ comes before $t(v_{j'})$ in $\mu$ and let
 $\sigma=v_1,v_2,\ldots,v_n$ be the corresponding enumeration of $V(G)$. Let $P_{\sigma}^G=X_1,X_2,\ldots,X_n$.  Then,
\begin{itemize}
\item[(1)] the chromatic number of $P_{\sigma}^G$ is at most $k$, and
\item[(2)] if $\chi( X_{\ell})=k$ for some $\ell \in [n]$, then $\mu( \leq t(v_{\ell}))$ contains at most $k$ vertices in $[[m],\{v_0\}]$.
\end{itemize}
\end{LEM}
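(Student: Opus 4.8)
The plan is to prove both parts by exhibiting, for each index $\ell$, a single bag of $P_{\mu}^{R_m(G)}$ that contains a revealing induced copy of $\tuple{X_\ell}$ (for (1)) or of a whole block $[I,X_\ell]$ (for (2)), and then reading chromatic information off that bag via Lemmas~\ref{isomorphic} and~\ref{coloring}. Write $P_{\mu}^{R_m(G)}=Y_1,\ldots,Y_{m(n+1)}$, where $w_p$ denotes the $p$-th vertex of $\mu$ and $Y_p=N[\{w_1,\ldots,w_p\}]\setminus\{w_1,\ldots,w_{p-1}\}$, so that $x\in Y_p$ exactly when $x$ occupies a position $\ge p$ in $\mu$ and either $x=w_p$ or $x$ has a neighbour at some position $\le p$. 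Two facts will be used repeatedly. First, because $t(v_\ell)$ is the \emph{first} copy of $v_\ell$ and the $t(v_j)$ respect the order of $\sigma$, every copy of any $v_j$ with $j>\ell$ occupies a position strictly after $t(v_\ell)$. Second, $X_\ell\subseteq\{v_\ell,v_{\ell+1},\ldots,v_n\}$, so every $v\in X_\ell$ has index $\ge\ell$. Recall also that two copies $(b,v),(b',v')$ with $v,v'\in V(G)$ are adjacent in $R_m(G)$ precisely when $b\ne b'$ and $vv'\in E(G)$.

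For (1), fix $\ell$, let $p$ be the position of $t(v_\ell)=w_p=(a_\ell,v_\ell)$, and embed $\tuple{X_\ell}$ into $Y_p$ as an induced subgraph; then $\chi(X_\ell)\le\chi(Y_p)\le k$, and taking the maximum over $\ell$ gives $\chi(P_{\sigma}^{G})\le k$. I would set $\phi(v_\ell)=w_p$ and, processing the remaining $v_j\in X_\ell$ (all with $j>\ell$) in increasing order of $j$, choose a column $b_j$ for $\phi(v_j)=(b_j,v_j)$ subject to two requirements: $b_j$ differs from the columns already assigned to earlier $G$-neighbours of $v_j$ in $X_\ell$ (so that $\phi$ is an induced isomorphism, since edges of $X_\ell$ then map to distinct columns over an edge of $G$, and non-edges stay non-adjacent), and $b_j$ differs from the column of $t(v_{i(j)})$ for one fixed $G$-neighbour $v_{i(j)}$ of $v_j$ with $i(j)\le\ell$ (such a neighbour exists because $v_j\in N(\{v_1,\ldots,v_\ell\})$). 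The second requirement makes $(b_j,v_j)$ adjacent to $t(v_{i(j)})$, which sits at a position $\le p$, so $(b_j,v_j)\in Y_p$. Each step forbids at most $|X_\ell|\le n$ columns, and $m\ge n+1$, so a valid $b_j$ always exists.

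For (2), assume $\chi(X_\ell)=k$ and let $I$ be the set of columns $i$ with $(i,v_0)\in\mu(\le t(v_\ell))$; writing $s=|I|$, the goal is $s\le k$. The geometric heart is the claim $[I,X_\ell]\subseteq Y_p$: for $i\in I$ and $v\in X_\ell$, the copy $(i,v)$ occupies a position $\ge p$ (by the two facts above), while it is adjacent to $(i,v_0)$, whose position $q_i$ satisfies $q_i\le p$; hence $(i,v)\in Y_p$. Now suppose $s\ge k+1$. Since $C$ restricts to a proper $k$-colouring of $\tuple{[I,X_\ell]}$ and $\chi(X_\ell)=k$ with $|I|>k$, Lemma~\ref{coloring} forces every column to see all $k$ colours, so for each $i\in I$ there is $v^{\ast}\in X_\ell$ with $C((i,v^{\ast}))=C((i,v_0))$. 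But $(i,v^{\ast})$ and $(i,v_0)$ are adjacent and both lie in the bag $Y_{q_i}$ — the copy $(i,v^{\ast})$ has the neighbour $(i,v_0)=w_{q_i}$ and position $\ge p>q_i$, while $(i,v_0)=w_{q_i}$ lies at the top of its own interval — contradicting that $C$ is proper on $Y_{q_i}$. Therefore $s\le k$, which is exactly (2).

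The main obstacle, and the step I would be most careful about, is the pair of bag-membership claims: that the chosen copies land in $Y_p$, and that each coincidence $C((i,v^{\ast}))=C((i,v_0))$ produced by Lemma~\ref{coloring} is actually realised as a monochromatic edge \emph{inside a common bag} $Y_{q_i}$. Both rest on the same elementary but essential observation that a vertex $(i,v_0)$ appearing at position $q_i$ keeps every later same-column copy inside the bag $Y_{q_i}$, together with the fact that all copies of $v_\ell$ and of the later $v_j$ occur at positions $\ge p$. By contrast, the column-counting for the greedy embedding in (1) is routine once $m\ge n+1$ is in hand.
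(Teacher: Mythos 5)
Your argument for part (1) is correct and is essentially the paper's: the paper also embeds $\tuple{X_\ell}$ into the bag of $t(v_\ell)$, the only difference being that it gets the induced copy by applying Lemma~\ref{isomorphic} to $[I,X_\ell\setminus v_\ell]\cup\{t(v_\ell)\}$ with $I=[m]\setminus\{f(v_1),\dots,f(v_\ell)\}$, whereas you assign columns greedily; both uses of $m\ge n+1$ are the same.

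Part (2), however, has a genuine gap in where the colouring $C$ lives. You (implicitly) take $C$ to be a proper $k$-colouring of $Y_p$, the bag of $t(v_\ell)$, and you correctly show $[I,X_\ell]\subseteq Y_p$. But every $(i,v_0)$ with $i\in I$ occurs \emph{strictly before} $t(v_\ell)$ in $\mu$, so $(i,v_0)\in\{w_1,\dots,w_{p-1}\}$ and hence $(i,v_0)\notin Y_p$: the value $C((i,v_0))$ is undefined, so the coincidence $C((i,v^\ast))=C((i,v_0))$ cannot even be stated. The attempted rescue via $Y_{q_i}$ does not close the gap, because the chromatic number of a path-decomposition only guarantees that each bag is $k$-colourable \emph{separately}; a colouring proper on $Y_p$ is neither defined on all of $Y_{q_i}$ nor proper there, so exhibiting the edge $(i,v^\ast)(i,v_0)$ inside $Y_{q_i}$ contradicts nothing about $C$. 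The repair is to run the whole argument inside one bag that contains both the block and a copy of $v_0$: let $(r,v_0)$ be the element of $[I,\{v_0\}]$ appearing \emph{last} in $\mu$, at position $q<p$. Every $(i,v)$ with $i\in I$, $v\in X_\ell$ has position $\ge p>q$ and is adjacent to $(i,v_0)=w_{q_i}$ with $q_i\le q$, so $[I,X_\ell]\cup\{(r,v_0)\}\subseteq Y_q$. A $k$-colouring of $Y_q$ restricted to $[I,X_\ell]$ then forces, by Lemma~\ref{coloring}, column $r$ to use all $k$ colours, leaving no colour for $(r,v_0)$, which is complete to $[\{r\},X_\ell]$. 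This single-bag version is exactly the paper's proof.
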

\begin{proof}
For all $v \in V(G)$, let $f(v)\in [m]$ be such that $t(v)=(f(v),v)$. 
Let $P_{\mu}^{R_m(G)} = Y_{(i_1,v_{j_1})},Y_{(i_2,v_{j_2})},\ldots, Y_{(i_{m(n+1)}, v_{j_{m(n+1)}})}$. 
For the first statement, it suffices to show that for all $\ell \in [n]$, $\tuple{Y_{(f(\ell),v_{\ell})}}$ contains $\tuple{X_{\ell}}$ as an induced subgraph. 
Let $I=[m] \setminus \{f(v_1),\ldots, f(v_\ell)\}$.
Then, $|I| \ge m-\ell \ge n+1-\ell =1+(n-\ell) > |X_{\ell}\setminus v_\ell| \ge \chi ( X_{\ell}\setminus v_\ell)$. 
Moreover, $f(v_\ell) \not \in I$ and $v_{\ell} \not \in X_{\ell}\setminus v_{\ell}$. By Lemma~\ref{isomorphic}, $\tuple{[I, X_\ell \setminus v_\ell] \cup \{(f(v_\ell), v_\ell)\}}$ contains $\tuple{X_{\ell}}$ as an induced subgraph.
Since $t(v_j)$ comes before $t(v_{j'})$ in $\mu$ for all $1\le j < j' \le n$, it follows that $Y_{(f(\ell),v_{\ell})}$ contains $[I, X_\ell \setminus v_\ell] \cup \{(f(v_\ell), v_\ell)\}$, as required.   


For the second statement, 
suppose $\mu( \leq t(v_{\ell}))$ has exactly $r$ vertices in $[[m],\{v_0\}]$, with $r \geq k+1$.  
By relabeling, we may assume that $(i,v_0)$ is in $\mu( \leq t(v_{\ell}))$ for all $i \in [r]$ and that $(r,v_0)$ appears last in $\mu$ among them. 
Observe that $Y_{(r,v_0)}$ contains $\{(r,v_0)\} \cup [[r],X_{\ell}]$. 
Let $C$ be  a $k$-coloring of $Y_{(r,v_0)}$.
By Lemma \ref{coloring}, since $r > \chi(X_{\ell})$, 
for every $k$-coloring of
$\tuple{[[r],X_{\ell}]}$, $[\{r\},X_{\ell}]$ sees all $k$ colors.
Hence $C([\{r\},X_{\ell}])=[k]$.
But then there is no available color for $(r, v_0)$, which yields a contradiction.
This completes the proof.
\end{proof}

We are now ready to prove Theorem \ref{mainthm}, which we restate for the reader's convenience.  

\begin{reptheorem}{mainthm}
Let $n$ and $k$ be positive integers, with $k \geq 2$. For every integer $m \geq n+k+2$ and every graph $G$ with $\chi_p(G)=k$ and $|V(G)|=n$, the path-chromatic number of $R_m(G)$ is $k$ if there is a special enumeration of $V(G)$. Otherwise, the path-chromatic number of $R_m(G)$ is $k+1$.
\end{reptheorem}

\begin{proof}
By Lemma \ref{isomorphic}, $R_m(G)$ contains $G$ as an induced subgraph, so $\chi_P(R_m(G)) \ge \chi_P(G) =k$. We break the proof up into a series of claims.  

\begin{clm}
If the path-chromatic number of $R_m(G)$ is $k$, then there exists a special enumeration of the vertices of $G$.
\end{clm}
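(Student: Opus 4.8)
The plan is to manufacture a special enumeration directly from an optimal path-decomposition of $R_m(G)$ and to feed it through Lemma~\ref{bound}. Since $\chi_P(R_m(G))=k$, Lemma~\ref{enumeration} applied to $R_m(G)$ yields an enumeration $\mu$ of $V(R_m(G))$ with $\chi(P_\mu^{R_m(G)})=k$. Reindexing $V(G)$ by the order in which the columns $[[m],\{v\}]$ first appear in $\mu$ produces exactly the enumeration $\sigma=v_1,\dots,v_n$ of Lemma~\ref{bound}. By part~(1) of that lemma $\chi(P_\sigma^G)\le k$, and since $R_m(G)$ contains $G$ as an induced subgraph (Lemma~\ref{isomorphic}) we have $\chi(P_\sigma^G)\ge\chi_P(G)=k$; hence $\chi(P_\sigma^G)=k$, which is the first requirement in the definition of a special enumeration. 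It then remains to verify the second requirement for $\sigma$.

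I would argue this by contradiction: suppose some index $\ell$ has $\chi(X_\ell)=k$ while $v_\ell$ has a neighbor $v_j$ with $j<\ell$, and produce a bag of $P_\mu^{R_m(G)}$ whose induced subgraph needs $k+1$ colors, contradicting $\chi(P_\mu^{R_m(G)})=k$. The target configuration is a set $\{(i^\ast,v_0)\}\cup[I,X_\ell]$ with $i^\ast\in I$ and $|I|\ge k+1$ lying inside a single bag: by Lemma~\ref{coloring} every copy $[\{i\},X_\ell]$, in particular $[\{i^\ast\},X_\ell]$, must use all $k$ colors in any $k$-coloring, and since $(i^\ast,v_0)$ is adjacent to all of $[\{i^\ast\},X_\ell]$ it then demands a $(k+1)$-st color.

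The key structural input is that, at the moment $t(v_\ell)$ when column $v_\ell$ first appears, every column of $X_\ell$ is simultaneously ``active'' in almost every copy. Indeed $X_\ell\subseteq\{v_\ell,\dots,v_n\}$, and each $v\in X_\ell$ has a genuine neighbor $v_s$ with $s\le\ell$ (for $v=v_\ell$ this neighbor is exactly $v_j$ --- this is the only place the assumed earlier neighbor is used), whose first copy appears no later than $t(v_\ell)$; consequently $(i,v)$ lies in the bag at time $t(v_\ell)$ for every copy $i$ outside the at most $n$ ``witness'' copies. Discarding these witness copies leaves a common set $I_0$ of at least $m-n\ge k+2$ copies in which all of $X_\ell$ is present at time $t(v_\ell)$. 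Meanwhile Lemma~\ref{bound}(2) guarantees that at most $k$ of the $m$ vertices $(i,v_0)$ precede $t(v_\ell)$, so at least $m-k\ge n+2$ of them appear afterwards and are available to serve as the apex $(i^\ast,v_0)$.

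The hard part will be the timing: locating one bag that contains an apex $(i^\ast,v_0)$ with $i^\ast\in I_0$ together with at least $k+1$ of the copies in $I_0$ that still carry \emph{all} of $X_\ell$. A copy leaves the common set as soon as one of its $X_\ell$-vertices is reached in $\mu$, so I must play the depletion of the copies of $X_\ell$ against the arrival times of the surplus $v_0$'s. The earlier-neighbor hypothesis is precisely what keeps the troublesome column $v_\ell$ alive alongside the rest, since without it the copies of $v_\ell$ could be sustained only by the scarce early $v_0$'s that Lemma~\ref{bound}(2) caps at $k$. Balancing these two clocks --- using $m\ge n+k+2$ to absorb the $\le n$ witness copies and the $\le k$ early $v_0$'s at once --- is the crux; once a suitable bag is exhibited, the coloring argument above closes the claim.
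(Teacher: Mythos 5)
Your first half is exactly the paper's argument: Lemma~\ref{enumeration} gives $\mu$ with $\chi(P_\mu^{R_m(G)})=k$, reindexing $V(G)$ by first appearance of the columns gives $\sigma$, and Lemma~\ref{bound}(1) together with the induced copy of $G$ inside $R_m(G)$ forces $\chi(P_\sigma^G)=k$. You also identify the correct contradiction configuration (a bag containing $\{(i,v_0)\}\cup[I,X_\ell]$ with $|I|>k$, which Lemma~\ref{coloring} shows cannot be $k$-coloured) and the correct structural inputs: the assumed earlier neighbour of $v_\ell$ is what makes every $u\in X_\ell$ adjacent to some $t(v_s)$ with $s\le\ell$, Lemma~\ref{bound}(2) caps the early $v_0$'s at $k$, and $m\ge n+k+2$ absorbs both the witness copies and the early $v_0$'s. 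But you explicitly stop at what you yourself call the crux --- actually exhibiting one bag of $P_\mu^{R_m(G)}$ that contains the configuration --- and the way you frame that step (tracking the ``depletion'' of copies as their $X_\ell$-vertices are consumed and balancing it against later arrivals of $v_0$'s) is not how it gets resolved. As written the proof is incomplete.

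The missing move is a single clean choice, not a balancing argument. Set $I=[m]\setminus\bigl(I_0\cup\{f(v_1),\dots,f(v_\ell)\}\bigr)$, where $I_0$ indexes the $v_0$'s appearing at or before $t(v_\ell)$; then $|I|\ge m-k-\ell\ge n-\ell+2>|X_\ell|\ge\chi(X_\ell)=k$. The key observation is that \emph{every} vertex of $[I,X_\ell\cup\{v_0\}]$ occurs strictly after $t(v_\ell)$ in $\mu$: the $v_0$'s because $I$ avoids $I_0$, the rest because $X_\ell\subseteq\{v_\ell,\dots,v_n\}$ and $I$ avoids $f(v_\ell)$. Now let $(i,v)$ be the \emph{first} vertex of $[I,X_\ell\cup\{v_0\}]$ in $\mu$. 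At that moment nothing has been depleted (no vertex of the set lies in $\mu(<(i,v))$), every vertex $(i',u)$ of $[I,X_\ell]$ already lies in $N[\mu(\le(i,v))]$ via the appropriate $t(v_s)$ with $s\le\ell$ (legitimate since $i'\ne f(v_s)$ for $i'\in I$, and since $(i,v)$ comes after $t(v_\ell)$), and the apex $(i,v_0)$ is in the bag because it either equals $(i,v)$ or is adjacent to it. Hence $Y_{(i,v)}\supseteq[I,X_\ell]\cup\{(i,v_0)\}$, and Lemma~\ref{coloring} gives the contradiction. Taking the first arrival from the already-restricted set makes the timing trivial; there are no two clocks to play against each other.
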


\begin{subproof}[Subproof] 
Let $\mu=(i_1,v_{j_1}),(i_2,v_{j_2}),\ldots, (i_{m(n+1)}, v_{j_{m(n+1)}})$ be an enumeration of the vertices of $R_m(G)$ such that $P_{\mu}^{R_m(G)}$ has chromatic number $k$.  
Let $P_{\mu}^{R_m(G)}=Y_{(i_1,v_{j_1})},Y_{(i_2,v_{j_2})},\ldots, Y_{(i_{m(n+1)}, v_{j_{m(n+1)}})}$.

For each $v \in V(G)$, let $t(v)$ be the vertex in $[[m],\{v\}]$ that appears first in $\mu$. 
By renaming the vertices in $G$, we may assume that $t(v_j)$ comes before $t(v_{j'})$ in $\mu$ for all $1\le j <j' \le n$.
Let $\sigma=v_1,v_2,\ldots,v_n$ be the corresponding enumeration of $V(G)$. We claim that $\sigma$ is a special enumeration of $V(G)$. 
For each $v \in V(G)$, let $f(v)\in [m]$ be such that $t(v)=(f(v),v)$.
By (1) of Lemma~\ref{bound},  $P_{\sigma}^G$ has chromatic number at most $k$. Hence, $\chi(P_{\sigma}^G)=k$.
Let $P_{\sigma}^G=X_1,X_2,\ldots,X_n$. 

Suppose $\sigma$ is not special. 
Then, there exists $\ell \in [n]$ such that $\chi(X_{\ell})=k$ and $v_{\ell}$ has a neighbor in $\{v_1,v_2,\ldots,v_{\ell-1}\}$.
Let $I_0 =\{i \mid (i,v_0) \in \mu( \leq t(v_{\ell}))\}$. By (2) of Lemma~\ref{bound}, $|I_0| \le k$. Let $I=[m] \setminus (I_0 \cup \{f(v_1),\ldots,f(v_{\ell})\})$.
Since $|I|\ge m-k -\ell \ge n-\ell+2 > |X_{\ell}| \geq \chi(X_{\ell})$,  it follows that  $\chi ( [I,X_{\ell}])=k$ and for every $k$-coloring of $\tuple{[I,X_{\ell}]}$, $[\{i\},X_{\ell}]$ sees all colors  for every $i \in I$ by Lemma \ref{coloring}. Let $(i,v)$ be the first vertex of $[I,X_{\ell} \cup \{v_0\}]$ that appears in $\mu$. Either $v=v_0$ or $(i,v)$ is adjacent to $(i,v_0)$.  In either case, $Y_{(i,v)}$ contains $[I,X_{\ell}] \cup \{(i,v_0)\}$. Since  $P_{\mu}^{R_m(G)}$ has chromatic number $k$, there exists a $k$-coloring $C$ of $\tuple{Y_{(i,v)}}$. Note that $C[[\{i\},X_{\ell}]]=[k]$. But $(i,v_0)$ is complete to $[\{i\},X_{\ell}]$, so there is no available color for $(i,v_0)$, a contradiction.  
\end{subproof}

Let $\sigma=v_1,v_2,\ldots,v_n$ be an enumeration of $V(G)$ with $\chi(P_{\sigma}^G)=k$. 
Let $\mu$ be the following enumeration of $V(R_m(G))$
\[
(1,v_1),\dots,(m,v_1),\dots,(1,v_n), \dots, (m,v_n),(1,v_0),\dots,(m,v_0).
\]
Let $P_{\mu}^{R_m(G)}=Y_{(1,v_1)},Y_{(2,v_1)},\dots,Y_{(m,v_1)},\ldots,Y_{(m,v_n)},Y_{(1,v_0)},\ldots,Y_{(m,v_0)}$.

\begin{clm} \label{mainclaim}
For all $i \in [m]$ and all $j \in [n]$, the chromatic number of $\tuple{Y_{(i,v_j)}}$ is at most $\chi(X_j)+1$.
\end{clm}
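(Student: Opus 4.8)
The plan is to pin down exactly which vertices the bag $Y_{(i,v_j)}$ contains, and then color it directly. Write $S=\mu(\le (i,v_j))$ for the set of vertices that appear at or before $(i,v_j)$ in $\mu$; by the definition of $P_{\mu}^{R_m(G)}$, the bag $Y_{(i,v_j)}$ consists of $(i,v_j)$ together with every vertex that appears after $(i,v_j)$ in $\mu$ and has a neighbor in $S$.

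The first step is to show that every vertex of $Y_{(i,v_j)}$ whose second coordinate is not $v_0$ lies in $[[m],X_j]$. All copies $(i',v_{j'})$ with $j'<j$ precede $(i,v_j)$ in $\mu$ and are therefore deleted from the bag, as are the copies $(i',v_j)$ with $i'<i$; so the only surviving vertices with second coordinate in $V(G)$ are $(i,v_j)$, the later copies $(i',v_j)$ with $i'>i$, and vertices $(i',v_{j'})$ with $j'>j$. For such a vertex $(i',v_{j'})$ to lie in $Y_{(i,v_j)}$ it must have a neighbor in $S$, and by the adjacency rule of $R_m(G)$ this neighbor is some $(i'',v_{j''})$ with $i''\ne i'$ and $v_{j''}v_{j'}\in E(G)$, where necessarily $v_{j''}\in\{v_1,\dots,v_j\}$. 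Hence $v_{j'}$ has a neighbor in $\{v_1,\dots,v_j\}$, which is exactly the condition $v_{j'}\in X_j$. Since also $v_j\in X_j$, this yields $Y_{(i,v_j)}\cap [[m],V(G)]\subseteq [[m],X_j]$.

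Given this containment the coloring is immediate. Put $c=\chi(X_j)$ and split $Y_{(i,v_j)}$ into $A=Y_{(i,v_j)}\cap[[m],V(G)]$ and $B=Y_{(i,v_j)}\cap[[m],\{v_0\}]$. As $A\subseteq[[m],X_j]$, Lemma~\ref{coloring} applied with $I=[m]$ and $U=X_j$ (legitimate since $m\ge n+k+2>n\ge\chi(X_j)$) gives a proper coloring of $\tuple{A}$ using the colors $[c]$. The set $B$ is independent in $R_m(G)$: two distinct vertices $(i',v_0)$ and $(i'',v_0)$ cannot be adjacent, because cross-copy adjacency requires both second coordinates to lie in $V(G)$ while same-copy adjacency requires exactly one coordinate to be $v_0$. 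I would therefore color every vertex of $B$ with one new color $c+1$. No edge between $A$ and $B$ becomes monochromatic since $B$ avoids $[c]$, so this is a proper $(c+1)$-coloring of $\tuple{Y_{(i,v_j)}}$, giving $\chi(Y_{(i,v_j)})\le\chi(X_j)+1$.

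I expect the only real work to be the structural first step: verifying carefully, from the adjacency rules of $R_m(G)$ together with the exact position of $(i,v_j)$ in $\mu$, that nothing with second coordinate outside $X_j\cup\{v_0\}$ survives in the bag. Once that containment is in place the coloring argument is routine, and in particular the role of $v_0$ is simply to contribute the single extra color that separates the $v_0$-vertices from the $\chi(X_j)$ colors used on the $G$-part.
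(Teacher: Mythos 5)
Your proof is correct and follows essentially the same route as the paper: the paper simply observes that $Y_{(i,v_j)}\subseteq[[m],X_j\cup\{v_0\}]$ and partitions this superset into $\chi(X_j)+1$ independent sets $[[m],U_1],\ldots,[[m],U_c],[[m],\{v_0\}]$, which is exactly your coloring. The only difference is that you spell out the verification of the containment that the paper leaves as an observation, which is a fine (and harmless) addition.
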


\begin{subproof}[Subproof]
Suppose $\chi(X_j)=c$ and let $(U_1,U_2,\ldots,U_c)$ be a partition of $X_j$ into independent sets of $G$.
Observe that $Y_{(i,v_j)}$ is a subset of $[[m],X_j\cup \{v_0\}]$, and 
$$
[[m],X_j\cup \{v_0\}] = [[m],\{v_0\}] \cup \left(\bigcup_{p=1}^c [[m],U_p]\right).
$$
Each set in the union is independent in $R_m(G)$, thus it follows that $\chi(Y_{(i,v_j)}) \leq c+1$.
\end{subproof}

\begin{clm}
The chromatic number of $P_{\mu}^{R_m(G)}$ is at most $k+1$. 
\end{clm}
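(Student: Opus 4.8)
The plan is to use the fact that the chromatic number of $P_{\mu}^{R_m(G)}$ is simply the maximum of $\chi(Y_w)$ over all bags $Y_w$, and to note that these bags come in two flavors: those indexed by $(i,v_j)$ with $j \in [n]$, and those indexed by $(i,v_0)$ with $i \in [m]$. Claim~\ref{mainclaim} already disposes of the first flavor, since it gives $\chi(Y_{(i,v_j)}) \le \chi(X_j)+1 \le k+1$, using $\chi(X_j) \le \chi(P_{\sigma}^G)=k$. So the only remaining work is to bound $\chi(Y_{(i,v_0)})$ for each $i \in [m]$.

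For these, I would compute the bags $Y_{(i,v_0)}$ explicitly from the definition. In the enumeration $\mu$, every vertex of the form $(i',v_j)$ with $v_j \in V(G)$ precedes all of the vertices $(1,v_0),\dots,(m,v_0)$. Since each $(i'',v_0)$ is adjacent to every $(i'',v)$ with $v \in V(G)$, the closed neighborhood of the prefix of $\mu$ ending at $(i,v_0)$ is already all of $V(R_m(G))$; removing the strictly earlier vertices then leaves exactly $[\{i,i+1,\dots,m\},\{v_0\}]$. Thus $Y_{(i,v_0)} = [\{i,\dots,m\},\{v_0\}]$.

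The key point is that this set is independent: two distinct vertices $(i'',v_0)$ and $(i''',v_0)$ are never adjacent in $R_m(G)$, since the first adjacency rule requires a common first coordinate, while the second requires both second coordinates to lie in $V(G)$, which excludes $v_0$. Hence $\chi(Y_{(i,v_0)}) = 1 \le k+1$ for every $i$. Combining this with Claim~\ref{mainclaim}, every bag of $P_{\mu}^{R_m(G)}$ has chromatic number at most $k+1$, which is exactly the claimed bound.

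The only place requiring any care is the explicit identification of the $v_0$-bags; once one checks that each such bag contains only $v_0$-vertices, the independence is immediate and the conclusion follows directly from the previously established claim. I do not anticipate a genuine obstacle here, as the real content of the argument is concentrated in Claim~\ref{mainclaim} rather than in this final step.
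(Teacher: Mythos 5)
Your proposal is correct and follows essentially the same route as the paper: invoke Claim~\ref{mainclaim} for the bags $Y_{(i,v_j)}$ with $j\in[n]$, and observe that each $Y_{(i,v_0)}$ consists only of $v_0$-vertices and is therefore independent. The paper simply notes $Y_{(i,v_0)}\subseteq[[m],\{v_0\}]$ rather than identifying the bag exactly, but the argument is the same.
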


\begin{subproof}[Subproof]
For every $i \in [m]$, $Y_{(i,v_0)}$ is a subset of $[[m],\{v_0\}]$ which is an independent set of $R_m(G)$. Hence, $\chi(Y_{(i,v_0)})\le 1$. By Claim~\ref{mainclaim}, the chromatic number of $\tuple{Y_{(i,v_j)}}$ is at most $k+1$ for $i\in [m], j\in [n]$. Thus, $\chi(P_{\mu}^{R_m(G)}) \le k+1$, as required.
\end{subproof}

\begin{clm}
If $\sigma$ is special, then $P_{\mu}^{R_m(G)}$ has chromatic number $k$.
\end{clm}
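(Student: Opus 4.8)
The plan is to prove the stronger statement that \emph{every} bag of $P_{\mu}^{R_m(G)}$ induces a subgraph with chromatic number at most $k$; combined with the inequality $\chi(P_{\mu}^{R_m(G)}) \ge \chi_P(R_m(G)) \ge \chi_P(G) = k$ already noted at the start of the proof, this gives the claim. I would sort the bags into three kinds. The bags $Y_{(i,v_0)}$ lie inside the independent set $[[m],\{v_0\}]$, so each contributes at most one color. For a bag $Y_{(i,v_j)}$ with $\chi(X_j) \le k-1$, Claim~\ref{mainclaim} already gives $\chi(Y_{(i,v_j)}) \le \chi(X_j)+1 \le k$. Thus the whole content of the argument is the remaining case of a bag $Y_{(i,v_j)}$ with $\chi(X_j)=k$, where Claim~\ref{mainclaim} yields only the bound $k+1$ and the specialness of $\sigma$ must be invoked.

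For such a bag the first step is to pin down its vertex set. By the observation in the proof of Claim~\ref{mainclaim}, $Y_{(i,v_j)} \subseteq [[m], X_j \cup \{v_0\}]$, so the only way the bound $k+1$ could be forced is through the vertices of the class $v_j$, namely $[[m],\{v_j\}]$. This is exactly where I would use specialness. For $i' < i$ the vertex $(i',v_j)$ precedes $(i,v_j)$ in $\mu$ and is therefore deleted from the bag. For $i' > i$, the vertex $(i',v_j)$ could enter $Y_{(i,v_j)}$ only through an edge to an already-processed vertex $(i'',v_{j''})$ with $j'' < j$; but such an edge would require $v_j v_{j''} \in E(G)$ with $v_{j''} \in \{v_1,\ldots,v_{j-1}\}$, which is impossible precisely because $\sigma$ is special and $\chi(X_j)=k$. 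Hence the only vertex of class $v_j$ in the bag is $(i,v_j)$ itself, and so
\[
Y_{(i,v_j)} \subseteq H := \{(i,v_j)\} \cup [[m], X_j \setminus v_j] \cup [[m],\{v_0\}].
\]

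The second step is to exhibit a proper $k$-coloring of $\tuple{H}$, which then restricts to $Y_{(i,v_j)}$. By Lemma~\ref{special} we have $\chi(X_j \setminus v_j)=k-1$, so I fix a proper coloring $d : X_j \setminus v_j \to [k-1]$. The key idea is that, since $v_j$ now occupies only layer $i$, one can afford to sacrifice that single layer: color every $V(G)$-vertex of layer $i$ (that is, $(i,v_j)$ and each $(i,w)$ with $w \in X_j \setminus v_j$) with color $k$; color $(i',w)$ by $d(w)$ for all $i' \ne i$; color $(i',v_0)$ with $k$ for all $i' \ne i$; and color $(i,v_0)$ with $1$. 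Verifying propriety is routine: cross-layer edges between two layers other than $i$ respect $d$; every edge incident with a layer-$i$ vertex (all colored $k$) has its other endpoint colored in $[k-1]$, being either an other-layer $d$-value or $(i,v_0)=1$; and each $v_0$-vertex avoids the colors of its own layer, since layer $i$ uses only color $k$ while every other layer uses only colors of $[k-1]$ on its $V(G)$-vertices. This is where $k \ge 2$ is needed, so that $1 \ne k$. We conclude $\chi(Y_{(i,v_j)}) \le \chi(\tuple{H}) \le k$, which finishes the bound for all bags.

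I expect the main obstacle to be the first step, the identification $Y_{(i,v_j)} \subseteq H$, because it is the single place where specialness enters and it must be handled with care. The reason it is decisive is that the naive coloring — lift a $(k-1)$-coloring of $X_j \setminus v_j$ to all layers and give $v_j$ the remaining color $k$ — \emph{fails}: the vertex $(i,v_0)$, adjacent to the whole copy of $X_j$ sitting in layer $i$, would then see all $k$ colors. Recognizing that $v_j$ lives in only one layer, and exploiting that freedom to collapse that layer onto a single color so as to free $(i,v_0)$, is the move that makes the coloring go through.
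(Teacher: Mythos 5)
Your proposal is correct and follows essentially the same route as the paper: reduce to bags $Y_{(i,v_j)}$ with $\chi(X_j)=k$, use specialness to show the bag contains only one vertex of the class $[[m],\{v_j\}]$, and then color layer $i$ monochromatically with color $k$ while lifting a $(k-1)$-coloring of $X_j\setminus v_j$ (supplied by Lemma~\ref{special}) to the other layers. The only cosmetic difference is that you give $(i,v_0)$ color $1$ where the paper gives it color $k-1$; both choices work since $k\ge 2$.
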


\begin{subproof}[Subproof]
Fix $i^* \in [m]$ and $j^* \in \{0\} \cup [n]$.  We will show that $\chi(Y_{({i}^*,v_{j^{*}})}) \le k$. If $j^*=0$, then $\chi(Y_{({i}^*,v_{j^{*}})}) =1$, so may assume $j^*\neq 0$.   
By Claim~\ref{mainclaim}, if $\chi(X_{j^{*}})\le k-1$, then $\chi(Y_{({i}^*,v_{j^{*}})}) \le k$. Hence, we may assume that $\chi(X_{j^{*}})=k$ and that $v_{j^{*}}$ has no neighbor in $\{v_1,v_2,\ldots,v_{j^*-1}\}$ by the definition of a special enumeration.

By Lemma~\ref{special}, there is a partition $(U_1^*,U_2^*,\ldots,U_{k-1}^*)$ of $X_{j^{*}} \setminus v_{j^{*}}$ into independent sets of $G$. For $i>i^*$, ${(i,v_{j^{*}})}$ has no neighbor in $\mu(<(i^*,v_{j^{*}}))$ since $v_{j^{*}}$ has no neighbor in $\{v_1,v_2,\ldots,v_{j^*-1}\}$.
So, $Y_{(i^*,v_{j^{*}})}$ is contained in $[[m],X_{j^{*}}\setminus v_{j^{*}}\cup \{v_0\}] \cup \{(i^*,v_{j^{*}})\}$. 

Let $C$ be the map from  $Y_{(i^*,v_{j^{*}})}$ to $[k]$ defined as
\begin{itemize}
\item for $i \neq i^*$, $C((i,v))=s$ for all $v \in U_s^*$ and $C((i,v_0))=k$,
\item $C((i^*,v))=k$ for all $v \in X_{j^{*}}$, and
\item $C((i^*,v_0))=k-1$.
\end{itemize}
It is easy to see that $C$ is a $k$-coloring of $\tuple{ Y_{(i,v_{j^{*}})}}$. 
Thus, $P_{\mu}^{R_m(G)}$ has chromatic number $k$, as required.
\end{subproof}
This last claim completes the entire proof.
\end{proof}

We finish the paper  by proving Theorem \ref{mainsec}.

\begin{reptheorem}{mainsec}
Let $G$ be a graph with $\chi_P(G)=k$ and $|V(G)|=n$. For all integers $\ell$ and $m$ such that $m \ge n+k+2$ and $\ell \ge m (n+1)+k+3$, the path-chromatic number of $R_{\ell}(R_{m}(G))$ is strictly larger than $k$.
\end{reptheorem}

\begin{proof}

Since $m\ge n+k+2$, Theorem~\ref{mainthm} shows that $R_{m}(G)$ has chromatic number either $k$ or $k+1$.

If $\chi_P(R_{m}(G)) =k+1$, then since $\ell \ge m(n+1)+k+3 = |V(R_{m}(G))| +(k+1)+2$, the path chromatic number of $R_{\ell}(R_{m}(G))$ is either $k+1$ or $k+2$ which is strictly bigger than $k$. So, we may assume that $\chi_P(R_{m}(G)) = k$.

To prove that $\chi_P(R_{\ell}(R_{m}(G))) > \chi_P(R_{m}(G))$, it suffices to show that there is no special vertex enumeration of $R_{m}(G)$ by Theorem~\ref{mainthm}.
Towards a contradiction, let $\mu=(i_1,v_{j_1}),(i_2,v_{j_2}),\ldots , (i_{m(n+1)}, v_{j_{m(n+1)}})$ be a special vertex enumeration of $R_{m}(G)$.
Let $P_{\mu}^{R_{m}(G)} =Y_{(i_1,v_{j_1})},Y_{(i_2,v_{j_2})},\ldots, Y_{(i_{m(n+1)}, v_{j_{m(n+1)}})}$.

For each vertex $v_j$ of $G$, let $t(v_j)$ be the vertex that appears first in $\mu$ among $[[m],\{v_j\}]$. 
We may assume that $t(v_j)$ comes before $t(v_{j'})$ in $\mu$ for every $1\le j < j' \le n$. 
Let $f(v_j) \in [m]$ be such that $t(v_j)=(f(v_j),v_j)$.  Let $\sigma=v_1,\ldots,v_n$. 

By (1) of Lemma~\ref{bound}, $P_{\sigma}^G$ has chromatic number $k$. 
Choose $j \in [n]$ such that $\chi( X_j)=k$. We claim that $\chi(X_j \setminus v_j)=k-1$.
Let $I_0=\{i \in [m] \mid (i,v_0)\in \mu(<t(v_j))\}$ and $I=[m] \setminus (I_0 \cup \{f(v_1),f(v_2),\ldots, f(v_j)\})$. 
By (2) of Lemma~\ref{bound}, $|I_0|\le k$. 
So, $|I|\ge m - k -j \ge (n-j+1)+1 > |X_j \setminus v_j| \ge \chi(X_j \setminus v_j)$. By Lemma~\ref{isomorphic}, 
$$
\chi([I,X_j \setminus v_j]) \ge \chi(X_j \setminus v_j ).
$$
Note that $Y_{t(v_j)}\setminus t(v_j)$ contains $[I,X_j\setminus v_j]$. 
So, if $\chi(Y_{t(v_j)}) <k$ then $\chi([I,X_j\setminus v_j]) <k$, and if $\chi(Y_{t(v_j)}) =k$ then by Lemma~\ref{special},  $\chi([I,X_j\setminus v_j])<k$ as well. In either case, 
$$
k-1 \ge \chi([I,X_j \setminus v_j]).
$$
Combining these inequalities, we obtain $k-1 \ge \chi(X_j \setminus v_j )$. 
Moreover, it is obvious that  $\chi(X_j \setminus v_j ) \ge k-1$ since $\chi(X_j) =k$. 
Therefore, $\chi(X_j \setminus v_j ) = k-1$.

Again, as $|I|> \chi(X_j \setminus v_j)$, it follows that for every $(k-1)$-coloring of $\tuple{[I,X_j\setminus v_j]}$, and every $i \in I$, $[\{i\}, X_j\setminus v_j]$ sees all $k-1$ colors by Lemma~\ref{coloring}.

Let $(i,v)$ be the first vertex of  $[I,X_j \cup \{v_0\}] $ that appears in $\mu$. 
Observe that $Y_{(i,v)}$ contains $[I,X_j \setminus v_j] \cup \{(i,v_0)\}$. For every $(k-1)$-coloring of $\tuple{[I,X_j \setminus v_j]}$, $[\{i\},X_j\setminus v_j]$ sees all $k-1$ colors, so $\tuple{[I,X_j \setminus v_j] \cup \{(i,v_0)\}}$ is not $(k-1)$-colorable since $(i,v_0)$ is complete to $[\{i\},X_j\setminus v_j]$. Thus, $\chi([I,X_j \setminus v_j] \cup \{(i,v_0)\})=k$, and so $\chi(Y_{(i,v)})=k$.
Since $\mu$ is special, $(i,v)$ has no neighbor in $\mu(<(i,v))$. So, $v$ is either $v_0$ or $v_j$.

By Lemma~\ref{special}, $\tuple{Y_{(i,v)}} \setminus (i,v)$ is $(k-1)$-colorable. 
If $v=v_0$ then $Y_{(i,v)} \setminus (i,v)$ contains $[I,X_j]$. By Lemma~\ref{isomorphic}, $\tuple{[I,X_j]}$ contains $\tuple{X_j}$ as an induced subgraph, contradicting that $\tuple{Y_{(i,v)}\setminus (i,v)}$ is $(k-1)$-colorable.
If $v=v_j$ then $Y_{(i,v)} \setminus (i,v)$ contains $[I,X_j \setminus v_j] \cup \{(i,v_0)\}$. 
Again, the chromatic number of $\tuple{[I,X_j \setminus v_j] \cup \{(i,v_0)\}}$ is $k$, a contradiction.

Therefore, $\mu$ is not special. This completes the proof.
\end{proof}

\textbf{Acknowledgments.} We would like to thank Jan-Oliver Fröhlich, Irene Muzi, Claudiu Perta and Paul Wollan for many helpful discussions. We would also like to thank the anonymous referees for numerous suggestions in improving the paper.

\bibliography{references}{}
\bibliographystyle{plain}
\end{document}